\documentclass[twoside,a4paper]{amsart}
\usepackage{style}
\usepackage{mathabx}
\usepackage{tikz}
\usepackage{stmaryrd}
\usepackage{graphicx}

\usepackage[left=3cm, right=3cm]{geometry}

\usepackage{xcolor}
\usepackage{pdfsync}
\usepackage{xcolor}
\definecolor{webgreen}{rgb}{0,.5,0}
\definecolor{webbrown}{rgb}{.6,0,0}
\definecolor{RoyalBlue}{cmyk}{1, 0.50, 0, 0}
\usepackage[colorlinks=true, breaklinks=true, urlcolor=webbrown, linkcolor=RoyalBlue, citecolor=webgreen,backref=page]{hyperref}

\usepackage{courier} 
\normalfont
\usepackage{epsfig, graphicx, subfigure}
\usepackage{amsmath, amssymb}

\let\Im=\undefined
\DeclareMathOperator{\Im}{Im}

\def\ge{\geqslant}
\def\le{\leqslant}

\newtheorem{theorem}{Theorem}[section]

\newtheorem{lemma}[theorem]{Lemma}

\theoremstyle{remark}

\definecolor{darkbrown}{RGB}{150,1,33} 
\numberwithin{equation}{section}
\usepackage{hyperref}
\begin{document}

\title[Pointwise behavior  \ldots \ldots]{
Pointwise behavior of SU(1,1) nonlinear Fourier transform}

\begin{abstract} We show that $SU(1,1)$ NLFT can diverge pointwise for square-summable coefficients. As a consequence, we prove that the classical pointwise asymptotics of polynomials orthogonal on the unit circle can fail for  measures in the Szeg\H{o} class. We also discuss some special cases when the pointwise convergence holds.
\end{abstract} \vspace{1cm}

\author[Sergey A. Denisov]{Sergey A. Denisov}
\address{Department of Mathematics, University of Wisconsin-Madison, 480 Lincoln Dr., Madison, WI 53706, USA}
\email{\href{mailto:denissov@wisc.edu}{denissov@wisc.edu}}

\thanks{
This research was supported by the NSF grant DMS-2450716,  the Simons Fellowship in Mathematics, the Simons Travel Support for Mathematicians Award, and the Van Vleck Professorship Research Award. The author gratefully acknowledges the hospitality of IHES where part of this work was done.
}

\subjclass{}

\keywords{}

\maketitle

\setcounter{tocdepth}{3}

\tableofcontents

\section{ Introduction to $SU(1,1)$ NLFT, statement of the problems, and main results.}

\medskip

\subsection{$SU(1,1)$ NLFT on $\Z$.}


 Let $\T$ and $\D$ denote the unit circle  and open unit disc in $\C$, respectively, both centered at the origin.  Let $F=\{F_n\}|_{n\in \Z}$ be a double-infinite sequence of complex numbers and $|F_n|<1, \forall n\in \Z$. Taking $N\in \Z^+$, we define a compactly supported sequence $F^{\langle N\rangle}$ by truncation: $F^{\langle N\rangle}_n=F_n\cdot \chi_{|n|\le N}, n\in \Z$, where $\chi_E$ denotes the characteristic function of a set $E$.  Define the $2\times 2$ matrices $X_n(z,F^{\langle N\rangle})$ as solution to the recursion
\begin{equation}\label{nach}
X_n=\Omega_n(F^{\langle N\rangle}) \left(\begin{array}{cc}
z& 0\\
0 &1
\end{array}\right)X_{n-1}, \quad n\in \Z\,,
\end{equation}
where \[
\Omega_n(F):=
 \frac{1}{(1-|F_{n}|^2)^{\frac 12}}
\left(\begin{array}{cc}
1& \overline{F_{n}}\\
F_{n}  &1
\end{array}\right), \quad X_{n}(z,F^{\langle N\rangle})=\left(\begin{matrix}z^{n}&0\\0&1
\end{matrix}
\right), \quad n< -N\,.
\]
Let $J:=\left(\begin{smallmatrix}1&0\\0&-1
\end{smallmatrix}
\right)$ and notice that matrix $\Omega_n(F)$ is $J$-unitary  (i.e., belongs to $U(1,1)$). The  matrix $\left(\begin{smallmatrix}
z& 0\\
0 &1
\end{smallmatrix}\right)$ is $J$-unitary for $z\in \T$, it is $J$-contraction if $|z|<1$, and $J$-expansion if $|z|>1$. Since
$
\det X_n=z^n, \, n\in \Z
$
it makes sense to renormalize 
\begin{equation}\label{red1}
X_n=:
\left(\begin{matrix}z^n&0\\0&1
\end{matrix}
\right)\widetilde X_n\,,
\end{equation}
and then
\begin{equation}\label{rec5}
\widetilde X_n= \frac{1}{(1-|F_n^{\langle N\rangle}|^2)^{\frac 12}}
\left(\begin{array}{cc}
1& \overline{F_n^{\langle N\rangle}}z^{-n}\\
F_n^{\langle N\rangle} z^n &1
\end{array}\right)\widetilde X_{n-1}, 
\end{equation}
with $\widetilde X_n=I:=\left(\begin{smallmatrix}1&0\\0&1
\end{smallmatrix}
\right)$ for $n< -N$. One can view $\widetilde X_n(z,F^{\langle N\rangle})$ as Jost solution of recursion \eqref{rec5} with normalization $\widetilde X_n(z,F^{\langle N\rangle})=I$ as $ n\to-\infty$. Now $\widetilde X_n\in SU(1,1)$ for $z\in \T$. More careful study of this new recursion  implies that (see p.5 in \cite{tt}) the matrix $\widetilde X_n$ takes the form
\[
\widetilde X_n(z,F^{\langle N\rangle})=:\left(\begin{array}{cc}
\frak{a}_n(z,F^{\langle N\rangle})& \frak{b}^{(*)}_n(z,F^{\langle N\rangle})\\
\frak{b}_n(z,F^{\langle N\rangle}) &\frak{a}^{(*)}_n(z,F^{\langle N\rangle})
\end{array}\right)\,,
\]
where \begin{equation}\label{oper1}
f^{(*)}(z):=\overline{f(\bar{z}^{-1})}.
\end{equation} 
Clearly, $f^{(*)}(z)=\overline{f(z)}$ if $z\in \T$.
Following \cite{tt} (except that we take the transposition of the matrix product used  in \cite{tt}, p.4  when defining the NLFT because we prefer to multiply the transfer matrices in the order $\ldots\cdot (\cdot)_{-N+1}\cdot (\cdot)_{-N}$ rather than  $(\cdot)_{-N}\cdot (\cdot)_{-N+1}\ldots$) we put forward a definition: \smallskip

\noindent {\bf Definition.} {\it For $F^{\langle N\rangle}$, define $SU(1,1)$ nonlinear Fourier transform (NLFT) on $\Z$ as the map 
\[
F^{\langle N\rangle}\mapsto \overbrace{F^{\langle N\rangle}}:=\left(\begin{array}{c}
\frak{a}(z,F^{\langle N\rangle})\\
\frak{b}(z,F^{\langle N\rangle}) 
\end{array}\right)\,,
\]
where $\frak{a}(z,F^{\langle N\rangle}):=\frak{a}_\infty(z,F^{\langle N\rangle})$ and 
$\frak{b}(z,F^{\langle N\rangle})=\frak{b}_\infty(z,F^{\langle N\rangle})$.}
\smallskip

 In fact, since $F^{\langle N\rangle}_n=0$ for $n>N$, we get 
\[
\left(\begin{array}{c}
\frak{a}(z,F^{\langle N\rangle})\\
\frak{b}(z,F^{\langle N\rangle}) 
\end{array}\right)=\left(\begin{array}{c}
\frak{a}_n(z,F^{\langle N\rangle})\\
\frak{b}_n(z,F^{\langle N\rangle}) 
\end{array}\right), \quad \forall n\ge N\,.
\]
The definition of NLFT can be extended to other classes of $\{F_n\}$ that decay as $|n|\to\infty$. We will provide more references later but we start by focusing on  $F\in \ell^p(\Z), p\in [1,\infty)$. It is known \cite{tt}, p.10 that $F\in \ell^1(\Z)$ implies existence of two functions $\frak{a}(z,F)$ and $\frak{b}(z,F)$, both defined on $\T$, such that $\|\frak{a}(z,F^{\langle N\rangle})-\frak{a}(z,F)\|_{A(\T)}\to 0$ and  $\|\frak{b}(z,F^{\langle N\rangle})-\frak{b}(z,F)\|_{A(\T)}\to 0$ when $N\to\infty$ and $A(\T)$ denotes the Wiener's algebra. Similarly, for $F\in \ell^p(\Z)$ with $p\in (1,2)$, the nonlinear analog of the Menshov-Paley-Zygmund theorem was established (see, \cite{tt}, p.11 and \cite{st,Den25,kov}) which yields, in particular, the following result: if $p\in (1,2)$ and $F\in \ell^p(\Z)$, then the limits 
$\lim_{N\to\infty}\frak{a}(z,F^{\langle N\rangle})$ and  $\lim_{N\to\infty}\frak{b}(z,F^{\langle N\rangle})$ exist for a.e. $z\in \T$. For $p>2$ the above limits might diverge for a.e. $z\in \T$ (see \cite{kls} for the discussion in the context of Jacobi matrices). The borderline case $p=2$ is critical, it attracted a lot of attention in harmonic analysis and approximation theory communities but the answer was not known. We formulate the following two questions:\medskip

\noindent{\bf Q1$_\Z$ (the strong version of $SU(1,1)$ pointwise convergence of NLFT on} $\Z$):  {\it Let $F\in \ell^2(\Z)$. Is it true that the limits
$\lim_{N\to\infty}\frak{a}(z,F^{\langle N\rangle})$ and  $\lim_{N\to\infty}\frak{b}(z,F^{\langle N\rangle})$ exist for a.e. $z\in \T$?}\medskip

In folklore, the assertion that those limits do actually exist  often went under the name {\it Nonlinear Carleson Conjecture (NCC)} (see, e.g., \cite{amt}, p.5 for $SU(2)$ setting). This problem is motivated by the study of the existence of wave operators for Jacobi matrices (see \cite{dy}). The $SU(1,1)$ structure of the problem ensures that 
\begin{equation}\label{sul}
\frak{a}^{(*)}(z,F^{\langle N\rangle})\frak{a}(z,F^{\langle N\rangle})-\frak{b}^{(*)}(z,F^{\langle N\rangle})\frak{b}(z,F^{\langle N\rangle})
=1,\quad z\in \C
\end{equation}
and, in particular, $|\frak{a}^{(*)}(z,F^{\langle N\rangle})|\ge 1$ for $z\in \T$.
We will use the following notation
\[
\frak{r}(z,F^{\langle N\rangle}):=\frak{b}(z,F^{\langle N\rangle})/\frak{a}^{(*)}(z,F^{\langle N\rangle})\,.
\]
Clearly, $\frak{r}(z,F^{\langle N\rangle})$  satisfies $|\frak{r}(z,F^{\langle N\rangle})|<1, z\in \T$. We suggest the following weaker version of $\bf Q1_\Z$.\medskip

\noindent{\bf Q2$_\Z$ (a weak version of $SU(1,1)$ pointwise convergence of NLFT on} $\Z$):  {\it Let $F\in \ell^2(\Z)$. Is it true that the limit
 $\lim_{N\to\infty}\frak{r}(z,F^{\langle N\rangle})$ exists for a.e. $z\in \T$?}\medskip
 
 Our first main result is the following theorem
 \begin{theorem}\label{t1}
 The answer to {\bf Q1}$_{\Z}$ is negative. If fact, there is $F\in \ell^2(\Z)$ such that the limits $\lim_{N\to\infty}\frak{a}(z,F^{\langle N\rangle})$ and  $\lim_{N\to\infty}\frak{b}(z,F^{\langle N\rangle})$ do not exist at every $z\in \T$.
 \end{theorem}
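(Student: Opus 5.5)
Our plan is to build $F$ as a sum of widely separated blocks $F=\sum_k \phi_k$, where $\phi_k$ is supported on a window $[M_k, M_k+L_k]$ with $M_k\to\infty$ very fast and $\|\phi_k\|_{\ell^2}^2 = \epsilon_k$ with $\sum_k \epsilon_k<\infty$. Each block is designed so that the partial product across it has a pointwise large effect on the whole circle. Since $\ell^2$ summability only controls $\log|\frak a|$ on average and not pointwise, each block must make $|\frak a|$ large, or make it oscillate by a fixed amount, at \emph{every} $z\in\T$, while costing only $\epsilon_k$ in $\ell^2$. The key identity we rely on is $\log|\frak a(z,F^{\langle N\rangle})|\ge 0$ together with the trace formula $\int_\T \log|\frak a|\,dm = -\tfrac12\sum\log(1-|F_n|^2)$. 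Pointwise largeness therefore has to be concentrated on small arcs, and the arcs have to be moved around so that together they cover $\T$ infinitely often.

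\textbf{Step 1 (single block).} For a target arc $I\subset\T$ centered at $\zeta_0$ and a large parameter $\Lambda$, we construct a finitely supported $\phi$ with $\|\phi\|_2^2\lesssim \epsilon$ such that the transfer matrix across the block has $|\frak a(z,\phi)|\ge \Lambda$ on $I$. The natural candidate is a modulated constant sequence $\phi_n=\delta\,\bar\zeta_0^{\,n}$ for $0\le n<L$ (after conjugating by the modulation in \eqref{rec5}). At $z=\zeta_0$ this is a product of $L$ identical hyperbolic $SU(1,1)$ matrices, so $|\frak a|\sim\cosh(\delta L)$. For $|z-\zeta_0|\lesssim \delta$ the one-step matrix is still hyperbolic, so growth like $e^{c\delta L}$ persists. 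Taking $\delta L=\log\Lambda$ gives $\|\phi\|_2^2=\delta^2L=\delta\log\Lambda$, which is small when $\delta$ is small. The affected arc has length about $\delta$. We then cover $\T$ by about $1/\delta$ such arcs and place them one after another in disjoint, separated windows. The total cost is about $\log\Lambda$, which is not small, so this naive version fails. To repair it we use \emph{divergence} rather than blow-up: each block only needs to produce a fixed rotation or phase jump (of size $\sim 1$) in $\frak b/\frak a^{(*)}$ or in $\arg\frak a$ on its arc. Then $\delta L\sim 1$, each arc costs $\sim\delta$, and a full cover of $\T$ costs $O(1)$. For block $k$ we use many sub-blocks with shrinking amplitude and accept arcs of size $\delta_k$ with overlap multiplicity controlled. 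To make the total summable we exploit that a \emph{phase} change can be caused cheaply. Conjugating by a large diagonal factor $\mathrm{diag}(\frak a,\frak a^{(*)})$ built up earlier means that a later small perturbation of size $\eta$ changes $\frak b$ by roughly $\eta|\frak a|^2$. So we first let $|\frak a|$ grow slowly (e.g. like $\log$ of the block index, which is compatible with the trace formula only on a set of measure zero). This is the delicate balancing.

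\textbf{Step 2 (gluing and separation).} The order of NLFT multiplication and the modulations $z^{\pm n}$ in \eqref{rec5} make the block contributions nearly multiplicative, $\frak a(F^{\langle N\rangle})\approx\prod \frak a(\phi_k)$ up to cross terms involving $\frak b$. Choosing $M_k$ fast-growing, and using that all blocks are finite so every quantity is a trigonometric polynomial, we ensure that at the truncation times $N=M_k-1$ and $N=M_k+L_k$ the values differ by a definite amount at every $z$. This shows that $\{\frak a(z,F^{\langle N\rangle})\}$ is not Cauchy at each $z$, and similarly for $\frak b$. Since changes in $\frak a$ at bounded $|\frak r|$ force changes in $\frak b$ via \eqref{sul}, it is enough to treat $\frak a$ and handle $\frak b$ in the same way.

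\textbf{Main obstacle.} The hardest step is making the block construction cover \emph{every} point of $\T$, not almost every, while keeping $\sum\epsilon_k<\infty$. The linear (Kolmogorov-type) obstruction is the trace formula; the nonlinear amplification through accumulated $|\frak a|$ is the escape. We would first attempt an explicit two-stage design. In stage one, a sparse $\ell^2$ sequence drives $\log|\frak a|$ to $+\infty$ on a dense $G_\delta$ while remaining finite in average. In stage two, the amplified sensitivity is used to produce $O(1)$ oscillations everywhere. Points where stage one fails to amplify must then be handled directly by Step 1 arcs, whose total length there is small.
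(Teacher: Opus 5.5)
There is a genuine gap: you never arrive at a mechanism that produces divergence with summable $\ell^2$ cost, and the amplification step you lean on rests on a miscalculation.

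\textbf{The amplification step fails.} By \eqref{mul13}--\eqref{mul23}, appending a block $G$ of size $\eta$ after $F$ multiplies $\frak{a}^{(*)}$ by $\frak{a}^{(*)}(z,G)+z^N\frak{b}(z,G)\frak{b}^{(*)}(z,F)/\frak{a}^{(*)}(z,F)=1+O(\eta)$, whatever the size of $|\frak{a}(z,F)|$. It changes $\frak{b}$ by $z^N\frak{b}(z,G)\frak{a}(z,F)+\frak{b}(z,F)(\frak{a}^{(*)}(z,G)-1)=O(\eta|\frak{a}(z,F)|)$. That is only $O(\eta)$ relative to $|\frak{b}(z,F)|\approx|\frak{a}(z,F)|$, not $\eta|\frak{a}|^2$. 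The increment of $\frak{r}$ even carries the factor $(\frak{a}^{(*)}(z,F))^{-2}$ (see the proof of \eqref{mul33}), so a large $|\frak{a}|$ damps rather than amplifies. A factor $|\frak{a}|^2$ would come only from a conjugation $M^{-1}PM$, not from the one-sided product $PM$ that truncation produces.

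\textbf{Your two-stage plan buys nothing.} Stage one drives $\log|\frak{a}|\to\infty$ only on a null set, as it must by \eqref{sumrule} and Fatou. The full-measure remainder is therefore left to Step 1, whose cost you yourself found to be $O(1)$ per sweep of $\T$. That is not summable over the infinitely many sweeps you need.

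\textbf{Step 2 asks for the impossible.} By \eqref{sumrule} and the weak-type $(1,1)$ bound for the conjugate function, a block of cost $\epsilon$ can change $\frak{a}$ by a fixed relative amount $\lambda$ only on a set of measure $\le C(\lambda)\epsilon$. So the large effect must be seen at different truncation times for different $z$, including times strictly inside a block.

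\textbf{The transfer to $\frak{b}$ fails.} \eqref{sul} relates only moduli. Divergence of $\frak{a}$ in phase, with $|\frak{a}|$ near $1$, therefore says nothing about $\frak{b}$.

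\textbf{The missing idea.} Keep $|\frak{a}|$ uniformly close to $1$ and make $\arg\frak{a}^{(*)}$ unbounded at every point. By \eqref{hilb} this argument is the conjugate function of $\tfrac12\log(1+|\frak{b}|^2)$, and the conjugate function blows up logarithmically at the edge of a staircase.

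Via Theorem~\ref{ht1}, the paper realizes $\nu$ smooth bumps $b_j=\frak{b}(\cdot,q^{(j)})$ of height $\delta$ on disjoint arcs $\Delta_j$ of length $2\pi/\nu$. It glues the $q^{(j)}$ in widely separated blocks. Since $b_{j_1}b_{j_2}=0$, the cross terms in \eqref{mul13} vanish in the limit. After the $j$-th block, $\frak{a}^{(*)}\approx\frak{A}_j=\prod_{s\le j}\frak{a}^{(*)}(\cdot,q^{(s)})$, whose log-modulus is a staircase of height $\sim\delta^2$ ending at $\phi_{j+1}$. The $\cot$ kernel sums the petals to the left harmonically, giving $|\arg\frak{A}_j|\gtrsim\delta^2\log\nu$ on $\Delta_{j+1}$. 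The total $\ell^2$ cost stays $\sim\delta^2$, independently of $\nu$.

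With $\delta_n=e^{-n}$ and $\nu_n=\lceil\exp(\exp(n^2))\rceil$, the costs are summable while the phases $\gtrsim e^{n^2-2n}$ blow up. Each point of a half-circle sees its large phase at the intermediate time $4T_j$, and alternating halves covers all of $\T$. Near-multiplicativity together with \eqref{glob1} transfers this to the glued sequence. Lemma~\ref{argu} (phase increments tend to $0$) then turns the unbounded phase into divergence of $\frak{a}$.

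For $\frak{b}$, the paper adds $\mu\chi_{n=0}$, so that $\frak{b}(z,F^{\langle n\rangle})=\frak{a}^{(*)}(z,H^{\langle n\rangle})(\mu+\frak{r}(z,H^{\langle n\rangle}))(1-\mu^2)^{-1/2}$. The last factor converges uniformly to a nonvanishing limit (Lemma~\ref{sss}, which relies on the $\ell^1$ control of Theorem~\ref{invt}).

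Your Step 1 bookkeeping actually has the right scaling once the bump height $h$ is decoupled from its width $w$: cost $\approx h$ per sweep, phase $\approx h\log(1/w)$ at the leading edge. By insisting that each block produce an $O(1)$ phase locally, you discarded exactly this nonlocal logarithmic accumulation.
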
\medskip

\noindent {\bf Remark.} In the theorem, we can take $F$ such that $\supp F\subset \Z^+$. Our argument does not provide an answer to {\bf Q2}$_\Z$.\medskip

 Let $\sigma$ be a probability measure on $\T$ with the infinite support (in the sense of cardinality). Denote the monic orthogonal polynomials by $\{\Phi_n(z,\sigma)\}$ and orthonormal polynomials by $\{\phi_n(z,\sigma)\}$, we use OPUC for both as shorthand. That is, 
\begin{eqnarray*}
\int_\T \Phi_n(z,\sigma)z^{-j}d\sigma=0, \quad \int_\T \phi_n(z,\sigma)z^{-j}d\sigma=0, \quad \forall j\in \{0,\ldots, n-1\},\quad\int_\T |\phi_n(z,\sigma)|^2d\sigma=1, 
\\ \deg (\Phi_n)=\deg(\phi_n)=n, \quad 
\text{coeff}_n(\Phi_n)=1, \quad \text{coeff}_n(\phi_n)>0\,,
\end{eqnarray*}
where $\text{coeff}_n(Q)$ denotes the $n$-th coefficient of the polynomial $Q$. For any polynomial $Q$, we let $Q^*:=z^n\overline{Q(\bar{z}^{-1})}$. Notice that such $\ast$-operation is different from the $(\ast)$-operation in \eqref{oper1} and it depends on $n\in \Z^+$. In fact, $Q^*=z^nQ^{(*)}$.\medskip

\noindent {\bf Definition.} {\it The measure $\sigma$ belongs to the Szeg\H{o} class ($\sigma\in \szc$) if 
\[
\int_{\T} \log w \,dm>-\infty\,,\] where $ d\sigma=wdm+d\sigma_s\,,
$
$m$ is the normalized Lebesgue probability measure on $\T$ and $\sigma_s$ is the singular part of $\sigma$.} \medskip

The existence of the limit $\lim_{n\to\infty}\phi_n^*(z,\sigma)$ for $z\in \T$ has been studied in many papers under different assumptions on measure $\sigma$ and we will discuss some results in section 3. Now, we give two relevant applications of  Theorem~\ref{t1} to the OPUC theory:\medskip

 \begin{theorem}\label{t2} There is $\sigma\in \szc$ such that the limit
 $
 \lim_{n\to\infty} \phi_n^*(z,\sigma)
 $
 does not exist for all $z\in \T$.
 \end{theorem}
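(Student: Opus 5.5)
The plan is to realize $\phi_n^*$ through the $SU(1,1)$ NLFT of the Verblunsky coefficients and then transfer the divergence in Theorem~\ref{t1}. Let $\{\alpha_n\}_{n\ge0}$ be the Verblunsky coefficients of $\sigma$ and $\rho_n=(1-|\alpha_n|^2)^{1/2}$. The normalized Szeg\H{o} recursion reads
\[
\left(\begin{array}{c}\phi_{n+1}\\ \phi_{n+1}^*\end{array}\right)=\frac{1}{\rho_n}\left(\begin{array}{cc}1&-\overline{\alpha_n}\\ -\alpha_n&1\end{array}\right)\left(\begin{array}{cc}z&0\\0&1\end{array}\right)\left(\begin{array}{c}\phi_{n}\\ \phi_{n}^*\end{array}\right),\qquad \phi_0=\phi_0^*=1 .
\]
This is exactly \eqref{nach} with $F_n=-\alpha_n$ for $n\ge0$ and $F_n=0$ for $n<0$. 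Applying \eqref{red1} and taking the second row, for $z\in\T$ I expect (up to a harmless index shift)
\[
\phi_{n}^*(z)=\frak{b}(z,F^{\langle n\rangle})+\overline{\frak{a}(z,F^{\langle n\rangle})}.
\]
By Szeg\H{o}'s theorem and Verblunsky's theorem, any $F\in\ell^2(\Z^+)$ with $|F_n|<1$ arises from a unique probability measure $\sigma\in\szc$. So it suffices to find $F\in\ell^2(\Z^+)$ (allowed by the Remark after Theorem~\ref{t1}) for which $\overline{\frak a}+\frak b$ diverges at every $z\in\T$.

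The difficulty is that Theorem~\ref{t1} gives divergence of $\frak a$ and $\frak b$ separately, not of their sum. The gauge symmetry $F\mapsto\lambda F$ with $|\lambda|=1$ leaves $\frak a$ unchanged and replaces $\frak b$ by $\lambda\frak b$. Consequently, if $\overline{\frak a}+\lambda\frak b$ converged at $z$ for two distinct values of $\lambda$, then both $\frak a$ and $\frak b$ would converge at $z$, which is forbidden. So for each $z$ at most one $\lambda$ is bad. This alone does not produce a single $\lambda$ that works for every $z$, so I will not rely on it.

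Instead I would revisit the construction in Theorem~\ref{t1} and extract a quantitative statement. The target is that for every $z\in\T$ there are $N_k\to\infty$ with $|\frak a(z,F^{\langle N_k\rangle})|\to\infty$ and $|\frak r(z,F^{\langle N_k\rangle})|\le 1-\delta(z)$ for some $\delta(z)>0$. Granting this, on $\T$
\[
|\phi_{N_k}^*|=|\overline{\frak a}+\frak b|=|\frak a|\,\bigl|1+\frak r\,\overline{\frak a}/\frak a\bigr|\ge |\frak a|(1-|\frak r|)\ge \delta(z)|\frak a|\to\infty .
\]
At the same time, $\phi_n^*(z)$ cannot tend to $\infty$ for all $z$ in a set of positive measure, because it is bounded in $L^2(\sigma)$ and, for $\sigma\in\szc$, converges in $L^2(w\,dm)$. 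A finer argument is needed to rule out the limit $\infty$ at every single $z$. One option is to interleave quiet blocks into the construction of Theorem~\ref{t1}, during which $\frak a(z,F^{\langle N\rangle})$ returns to a bounded size. This makes $\limsup|\phi_N^*|=\infty$ while $\liminf|\phi_N^*|<\infty$, so the limit does not exist.

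The main obstacle is this quantitative strengthening: growth of $|\frak a|$ at every $z$ along a subsequence, with $|\frak r|$ uniformly away from $1$, together with returns to bounded size. The first thing I would try is to check whether the blocks in the Theorem~\ref{t1} construction are built from nearly constant or modulated potentials whose transfer matrices are hyperbolic with a controlled eigenvector direction. That structure would pin $\frak r$ near a point of modulus strictly less than $1$ while $\frak a$ grows. Failing that, I would modify the construction so that each block is followed by its reversed, conjugated block, which nearly cancels it and returns the product to about $I$, so that $\phi_N^*$ oscillates between large values and values close to $1$.
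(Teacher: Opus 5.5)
Your reduction to the NLFT via the Szeg\H{o} recursion, and your use of Verblunsky's and Szeg\H{o}'s theorems to get $\sigma\in\szc$ from $F\in\ell^2$, are fine. The core step of your plan, however, cannot work. On $\T$ the identity \eqref{sul} reads $|\frak{a}|^2-|\frak{b}|^2=1$, and $|\frak{r}|=|\frak{b}|/|\frak{a}|$, so $|\frak{a}(z,F^{\langle N\rangle})|=(1-|\frak{r}(z,F^{\langle N\rangle})|^2)^{-1/2}$.

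Your target ``$|\frak{a}|\to\infty$ along $N_k$ while $|\frak{r}|\le 1-\delta(z)$'' is therefore self-contradictory: the second condition forces $|\frak{a}|\le(2\delta(z)-\delta(z)^2)^{-1/2}$. Equivalently, your lower bound $|\frak{a}|(1-|\frak{r}|)=\sqrt{(1-|\frak{r}|)/(1+|\frak{r}|)}$ never exceeds $1$.

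Growth is also not what happens in the construction behind Theorem~\ref{t1}. There $|\frak{r}(z,H^{\le n})|\lesssim e^{-n^*}$ for all $n$ and all $z\in\T$, so $|\frak{a}(z,H^{\le n})|$ stays within $1+O(e^{-n^*})$ throughout. The divergence of $\frak{a}$ is a pure phase phenomenon: $\sup_T|\arg\frak{a}^{(*)}(z,H^{\langle T\rangle})|=\infty$ at every $z$ (Lemma~\ref{gr9}), while the increments of the argument tend to zero (Lemma~\ref{argu}). Your quiet-block and reversed-block modifications try to control a growth that never occurs, so they do not close the gap.

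What is missing is the exact factorization. With $\frak{r}=\frak{b}/\frak{a}^{(*)}$ and $\frak{a}^{(*)}=\overline{\frak{a}}$ on $\T$,
\[
\phi_n^*(z,\sigma)=\frak{b}(z,F^{\le n})+\frak{a}^{(*)}(z,F^{\le n})=\frak{a}^{(*)}(z,F^{\le n})\bigl(1+\frak{r}(z,F^{\le n})\bigr).
\]
There is no unimodular factor $\overline{\frak{a}}/\frak{a}$ next to $\frak{r}$. That factor in your display is an error, and it is what made the ``divergence of a sum'' look delicate.

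Now take $F=H$, the sequence from the proof of Theorem~\ref{t1} before the $\mu\chi_{n=0}$ modification. By Lemma~\ref{sss} and \eqref{stek}, $\frak{r}(z,H^{\le n})$ converges uniformly on $\T$ to a function of modulus $\lesssim e^{-n^*}$. Hence $1+\frak{r}(z,H^{\le n})$ converges uniformly to a nonvanishing limit, and $\phi_n^*(z,\sigma)$ converges at $z$ if and only if $\frak{a}^{(*)}(z,H^{\le n})$ does. The latter diverges at every $z\in\T$, and $H\in\ell^2$ gives $\sigma\in\szc$.

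You were right that Theorem~\ref{t1} cannot be used as a black box. The extra input needed from its construction is the uniform convergence of $\frak{r}$, which is already proved there, not growth of $|\frak{a}|$.
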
\medskip
 
 and\medskip
 \begin{theorem}\label{t39}There is $\sigma\in \szc$ and the sequence $\{\alpha_n\}\in \ell^2(\Z^+)$ such that the orthogonal series 
 \begin{equation}\label{lus}
 \sum_{n\in \Z^+}\alpha_n\phi_n(z,\sigma)
 \end{equation}
 diverges for all $z\in \T$.
 \end{theorem}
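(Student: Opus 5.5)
We plan to derive Theorem~\ref{t39} from Theorem~\ref{t2} through the Christoffel--Darboux and Szeg\H{o} recursion machinery: a divergent orthogonal series arises naturally from the divergence of $\phi_n^*$. Take $\sigma\in\szc$ from Theorem~\ref{t2}, with Verblunsky coefficients $\{\gamma_n\}$. The Szeg\H{o} condition is equivalent to $\{\gamma_n\}\in\ell^2(\Z^+)$. The recursion reads
\[
\phi_{n+1}^*(z)=(1-|\gamma_n|^2)^{-1/2}\bigl(\phi_n^*(z)-\gamma_n z\phi_n(z)\bigr).
\]
Telescoping gives
\[
\phi_{N}^*(z)=\prod_{n<N}\rho_n^{-1}\Bigl(1-\sum_{n<N}\Bigl(\prod_{k\le n}\rho_k\Bigr)\gamma_n z\phi_n(z)\Bigr),\qquad \rho_n:=(1-|\gamma_n|^2)^{1/2}.
\]

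Since $\prod_{n<N}\rho_n$ converges to a positive limit (because $\gamma\in\ell^2$), the existence of $\lim_N\phi_N^*(z)$ is equivalent to the convergence of
\[
\sum_n \alpha_n \bigl(z\phi_n(z)\bigr),\qquad \alpha_n:=-\Bigl(\prod_{k\le n}\rho_k\Bigr)\gamma_n\in\ell^2 .
\]
The difficulty is the factor $z$. For every fixed $z\in\T$ with $|z|=1$, however, $z\ne 0$, so the series $\sum\alpha_n z\phi_n(z)$ converges if and only if $\sum\alpha_n\phi_n(z)$ converges. Hence, by Theorem~\ref{t2}, the series \eqref{lus} diverges for all $z\in\T$.

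The steps, in order, are:
\begin{enumerate}
\item Recall the Szeg\H{o} recursion in the normalized form.
\item Iterate it to obtain the telescoped identity above, by induction on $N$.
\item Use $\sum|\gamma_n|^2<\infty$ to get convergence of the products to a nonzero constant, and $\alpha\in\ell^2$.
\item Conclude by Theorem~\ref{t2}.
\end{enumerate}

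The main point needing care is step 2. The recursion expresses $\phi^*_{n+1}$ through both $\phi_n^*$ and $\phi_n$, and I must verify that the telescoping indeed isolates $\phi_n$ linearly, rather than $\phi_n^*$ reappearing. Rewriting the recursion as
\[
\rho_n\phi_{n+1}^*-\phi_n^*=-\gamma_n z\phi_n
\]
and multiplying by $\prod_{k<n}\rho_k$ makes the left side a telescoping difference of $P_{n+1}\phi_{n+1}^*$ and $P_n\phi_n^*$, with $P_n:=\prod_{k<n}\rho_k$. This yields the formula with $\alpha_n=-P_n\gamma_n$, and the argument above goes through unchanged with this corrected indexing.
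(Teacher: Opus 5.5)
Your proposal is correct and matches the paper's argument for Theorem~\ref{tbig}. The paper also telescopes the Szeg\H{o} recursion into $\nu_n\phi_n^*(z,\sigma)=1-z\sum_{j<n}\gamma_j\nu_j\phi_j(z,\sigma)$ with $\nu_n=\prod_{j<n}(1-|\gamma_j|^2)^{1/2}$, which is your corrected $P_n$ (the initial $\prod_{k\le n}\rho_k$ would not give a valid identity). It then uses that $\nu_n$ has a positive limit for Szeg\H{o} measures and takes $\alpha_j=\pm\gamma_j\nu_j$ together with the everywhere-divergent $\phi_n^*$ from Theorem~\ref{t2}.
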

 \noindent {\bf Remark.} The measure $\sigma$ in the previous two theorems is actually more regular: it is absolutely continuous with continuous and positive density (see Theorem \ref{tbig}). For $\sigma=m$, we have $\phi_n=z^n$ and the series \eqref{lus} converges a.e. on $\T$ for every $\{\alpha_n\}\in \ell^2(\Z^+)$ by the Carleson theorem \cite{carles,st} which settled the Lusin's conjecture. The Theorem \ref{t39} shows that the analog of Lusin's conjecture does not hold within the Szeg\H{o} class of orthogonality measures.

 \bigskip

\subsection{$SU(1,1)$ NLFT on $\R$.}

Suppose $q\in L^1_{\rm loc}(\R)$. For every $T>0$, we let $q^{\langle T\rangle}:=q\cdot \chi_{|x|<T}$.
Define $X(x,k,q^{\langle T\rangle})$ as the solution to
\[
\quad \partial_x{X}=\left(
\begin{array}{cc}
ik & \overline{q}^{\langle T\rangle}\\
q^{\langle T\rangle}&0
\end{array}
\right)X \quad \text{for }\,\, x\in \R \quad \text{and}\quad X(x,k,q^{\langle T\rangle})=\left(\begin{array}{cc}e^{ikx}&0\\0&1\end{array}\right) \quad \text{for}\quad  x\le -T\,,
\]
where $k\in \C$. It is easy to check  that $X\in U(1,1)$  for $k\in \R$, $X$  is $J$-contraction for $k\in \C^+$, and $J$-expansion for $k\in \C^-$.  If $X=:\left(\begin{smallmatrix}
e^{ikx} & 0\\
0 & 1
\end{smallmatrix}\right)\widetilde X$, then
\[
 \partial_x{\widetilde X}=\left(
\begin{array}{cc}
 0& \overline{q}^{\langle T\rangle}e^{-ikx}\\
q^{\langle T\rangle}e^{ikx}&0
\end{array}
\right)\widetilde X \quad \text{for} \quad x\in \R \quad\text{and} \quad  \widetilde X(x,k,q^{\langle T\rangle})=\left(\begin{array}{cc}1&0\\0&1\end{array}\right) \quad \text{for }\quad x\le -T\,.
\]
Now, $\widetilde X\in SU(1,1)$ for $k\in \R$.  For $x\ge T$, we have
\[
\widetilde X(x,k,q^{\langle T\rangle})=:\left(\begin{array}{cc}
\frak{a}(k,q^{\langle T\rangle})& \frak{b}^\#(k,q^{\langle T\rangle})\\
\frak{b}(k,q^{\langle T\rangle})& \frak{a}^\#(k,q^{\langle T\rangle})
\end{array}\right)\,,
\]
where we denote $f^\#(k):=\overline{f(\bar{k})}$ for $k\in \C$.
\smallskip

\noindent {\bf Definition.} {\it For $q^{\langle T\rangle}$, define $SU(1,1)$ NLFT on $\R$ as the map 
\[
q^{\langle T\rangle}\mapsto \overbrace{q^{\langle T\rangle}}:=\left(\begin{array}{c}
\frak{a}(k,q^{\langle T\rangle})\\
\frak{b}(k,q^{\langle T\rangle}) 
\end{array}\right)\,.
\]}
\smallskip
In analogy with $\Z$ setup, we have two questions for the critical case when $q\in L^2(\R)$ (they both have positive answers for $q\in L^p(\R), p\in [1,2)$, see \cite{ck},\cite{kov1},\cite{musc},\cite{st}):\medskip

\noindent{\bf Q1$_\R$ (the strong version of $SU(1,1)$ pointwise convergence of NLFT on} $\R$):  {\it Let $q\in L^2(\R)$. Is it true that the limits
$\lim_{T\to\infty}\frak{a}(k,q^{\langle T\rangle})$ and  $\lim_{T\to\infty}\frak{b}(k,q^{\langle T\rangle})$ exist for a.e. $k\in \R$?}\medskip

and\medskip

\noindent{\bf Q2$_\R$ (a weak version of $SU(1,1)$ pointwise convergence of NLFT on} $\R$):  {\it Let $q\in L^2(\R)$. Is it true that the limit
 $\lim_{T\to\infty}\frak{b}(k,q^{\langle T\rangle})/\frak{a}^\#(k,F^{\langle T\rangle})$ exists for a.e. $k\in \R$?}\medskip

 The first question is motivated by the scattering theory in Schr\"odinger evolution \cite{dm}. Our next result is 
 
 \begin{theorem}\label{t4} The answer to {\bf Q1}$_\R$ is negative. In fact, there is $q\in L^2(\R)$ for which the limits $\lim_{T\to\infty}\frak{a}(k,q^{\langle T\rangle})$ and  $\lim_{T\to\infty}\frak{b}(k,q^{\langle T\rangle})$ do not exist for all $k\in \R$.
 \end{theorem}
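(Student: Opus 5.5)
We will deduce Theorem~\ref{t4} from Theorem~\ref{t1} (in the version of the Remark, with $\supp F\subset\Z^+$) by embedding the discrete recursion into the continuous one through piecewise constant potentials.

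\textbf{Step 1: the embedding.} Fix a sequence $F\in\ell^2(\Z^+)$ with $|F_n|<1$. On each unit interval $[n,n+1)$ set $q(x)=c_n e^{i\theta_n}$ for suitable constants. On an interval where $q$ is a constant $c$, the untwisted equation $\partial_x X=\left(\begin{smallmatrix} ik&\bar c\\ c&0\end{smallmatrix}\right)X$ is explicitly solvable. Its propagator over length one is $\exp$ of a constant matrix, which is close to $\left(\begin{smallmatrix} e^{ik}&0\\0&1\end{smallmatrix}\right)$ times an $SU(1,1)$ matrix of the type $\Omega$, with off-diagonal entry $\approx c\,(e^{ik}-1)/(ik)$.

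The variable $z=e^{ik}$ is periodic in $k$, but the coefficients depend on $k$, so the map is not exactly the discrete NLFT. A cleaner route is to use very short intervals of length $\delta_n$ with large amplitude, so that $\int_{n}^{n+\delta_n}q=\beta_n:=\operatorname{artanh}|F_n|\cdot e^{i\arg F_n}$. As $\delta_n\to0$, the propagator over $[n,n+\delta_n]$ tends, uniformly in $k$ on compacts, to $\exp\left(\begin{smallmatrix}0&\bar\beta_n\\ \beta_n&0\end{smallmatrix}\right)=\Omega_n(F)$. Between these bumps, the free evolution over a unit interval is exactly $\mathrm{diag}(e^{ik},1)$. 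Hence $\widetilde X(N^+,k)$ equals $\widetilde X_N(e^{ik},F)$ up to errors $\epsilon_n$ that we may make summable. For instance, choose $\delta_n$ so that the error is $\le 2^{-n}$ uniformly for $|k|\le n$, with the error measured after conjugation by the already-accumulated product up to step $n-1$, which is a fixed continuous function of $k$.

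\textbf{Step 2: the $L^2$ issue.} A bump of height $\beta_n/\delta_n$ has $L^2$ norm $|\beta_n|/\sqrt{\delta_n}$, which is not square summable. This is the main obstacle. The fix I would try first is to keep the amplitude bounded: on $[n,n+1]$ take $q=\beta_n\cdot g(x-n)$ with $g$ smooth, $\int g=1$. The propagator is then the discrete step with the parameter replaced by $\beta_n\hat g(k)$-type factors plus $O(|\beta_n|^2)$ corrections, which are summable only if $F\in\ell^2$ is upgraded. This is not enough in general.

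Instead, I would rescale. Replace $k$ by $k/L$ and use $q_L(x)=L^{-1}q(x/L)$, which preserves the structure, and combine this with the exact identity that for $q=\sum_n\beta_n\,\delta$-like profiles at integer points the transfer matrices depend on $k$ only through $e^{ik}$ and $\hat g(k)$. Concretely, I would choose $q$ piecewise constant, $q=\beta_n$ on $[n,n+1)$. Then the one-step matrix is an explicit entire function of $(k,\beta_n)$. Its first-order part in $\beta_n$ is $\beta_n(e^{ik}-1)/(ik)$, and the nonlinear remainder is $O(|\beta_n|^2)$ with a bounded multiplicative structure. The product is therefore an $SU(1,1)$ NLFT on $\Z$ at $z=e^{ik}$ with coefficients $\tilde F_n(k)=\beta_n m(k)+O(|\beta_n|^2)$, where $m(k)\ne0$ for $k\notin2\pi\Z\setminus\{0\}$.

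\textbf{Step 3: transferring divergence.} Divergence at every $z$ in Theorem~\ref{t1} presumably comes from a lacunary or block construction, robust under small perturbations of coefficients. I would revisit that construction so that it tolerates coefficient multipliers $m(k)$ that are bounded and bounded away from $0$ on compacts, and perturbations summable in $\ell^1$. Since $F\in\ell^2$ makes $\sum|\beta_n|^2<\infty$, the quadratic errors are indeed $\ell^1$. An $\ell^1$ perturbation changes the partial products by a convergent factor, so divergence persists. Finally, convergence of $\frak a(k,q^{\langle T\rangle})$ along all $T\to\infty$ would imply convergence along integer $T$, so divergence along the integer sequence yields Theorem~\ref{t4}. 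The residual points $k\in2\pi\Z\setminus\{0\}$ would be handled by a second shifted copy of the construction with interval length irrational, placed on a disjoint half-line.
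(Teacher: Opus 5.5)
Your route differs from the paper's, which does not reduce to Theorem~\ref{t1} at all. The paper reruns the daisy construction directly on $\R$. It uses the inverse-scattering bijection between Schwartz potentials $q$ and Schwartz data $\frak{b}(k,q)$ to prescribe $|\frak{b}(k,q^{(j)})|$ as bumps with exactly disjoint supports in $k$. This gives argument growth of order $\delta^2\log\nu$ on a unit interval, and the paper then concatenates such blocks for unit intervals $I_j$ covering every $k$ infinitely often. Your embedding step is adequate: up to an $O(|\beta_n|^2)$ error in norm, uniformly in real $k$, the step on $[n,n+1)$ is a discrete step with coefficient $\beta_n m(k)$ times an $n$-independent unimodular factor. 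The gap is Step 3, which you only announce and where the whole difficulty sits. At frequency $k$ you face the discrete NLFT at $z=e^{ik}$ of $m(k)\beta$. The phase of $m(k)$ is harmless, since a global phase rotation leaves $\frak{a}$ unchanged and rotates $\frak{b}$. But $|m(k)|=|2\sin(k/2)/k|$ is a genuine rescaling $\beta\mapsto\lambda\beta$, and $(\lambda-1)\beta$ is not an $\ell^1$ perturbation. It is comparable to $\beta$ itself, and $\beta\notin\ell^1$ (each daisy $D^{(n)}$ has $\ell^1$ norm $\gtrsim\nu_n\delta_n$). Since $F\mapsto(\frak{a},\frak{b})$ is not homogeneous, divergence for $F$ tells you nothing about $\lambda F$.

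Concretely, \eqref{lb1} rests on $|\frak{b}(z,q^{(j)})|=b_j$ vanishing exactly off $\Delta_j$, so that $\log|\frak{A}_j|=0$ to the right of $\phi_{j+1}$. For $\lambda\neq 1$ nothing makes $\frak{b}(z,\lambda q^{(j)})$ vanish off $\Delta_j$, because the cubic and higher terms of the nonlinear expansion are not localized. With $\nu_n=\lceil\exp(\exp(n^2))\rceil$ bumps against $\delta_n=e^{-n}$, you would need new pointwise estimates showing these tails do not destroy \eqref{b1}, \eqref{lb1} and \eqref{cut2}. Moreover $|m(k)|\to 0$ as $|k|\to\infty$, and for fixed $z=e^{ik}$ the scales $|m(k+2\pi j)|$, $j\in\Z$, accumulate at $0$. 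So tolerance on compacts is not enough: you need robustness for every $\lambda\in(0,1]$, built into a single $\beta$ by a further diagonal construction. Your claim that an $\ell^1$ perturbation only changes partial products by a convergent factor also needs the unperturbed partial products to be uniformly bounded at $z$. For $H$ this follows from \eqref{cut2}, but for $\lambda H$ you have no such bound. A smaller point: placing the second copy on the opposite half-line is risky. The truncation $q^{\langle T\rangle}$ cuts both copies simultaneously, and a product of two divergent transfer matrices need not diverge; interleave blocks on one half-line instead, as the paper alternates $\T_\ell$- and $\T_r$-blocks. The missing idea is to get exact localization of $|\frak{b}|$ directly in the variable $k$. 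Inverse scattering on $\R$ provides this, and the discrete-to-continuous embedding does not.
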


 The structure of the paper is as follows. The second section contains the proof of Theorem~\ref{t1}.  There, we also  briefly mention how the proof of Theorem~\ref{t1} can be modified to get Theorem~\ref{t4}.   In the third section, we discuss the connection of $SU(1,1)$ NLFT on $\Z$ to the OPUC theory and prove Theorem~\ref{t2} and Theorem~\ref{t39}. We also explain how the general theorems on convergence of orthogonal series provide the best known results on the pointwise convergence of $SU(1,1)$ NLFT on $\Z$.\medskip
 
 \noindent {\bf Acknowledgments.} I thank Roman Bessonov for helpful discussions.\bigskip\bigskip
 
\noindent{ \bf Notation.} \medskip

$\bullet$ We let $\N=\{1,2,\ldots\}$ and $\Z^+=\{0,1,2\ldots\}$.
$\T$ and $\D$ denote the unit circle  and open unit disc in $\C$, respectively, both centered at zero.\smallskip

$\bullet$ The symbol $C$ denotes the absolute constant which can change the value from formula to formula. If we write, e.g., $C(\alpha)$, this defines a positive function of parameter $\alpha$. \smallskip

$\bullet$ For two non-negative functions $f_1$ and $f_2$, we write $f_1\lesssim f_2$ if  there is an absolute
constant $C$ such that $f_1\le Cf_2$ for all values of the arguments of $f_1$ and $f_2$. We define $\gtrsim$
similarly and say that $f_1\sim f_2$ if $f_1\lesssim f_2$ and
$f_2\lesssim f_1$ simultaneously. If $|f_3|\lesssim f_4$, we will write $f_3=O(f_4)$. If $\alpha$ is a parameter, we write $f_1\le_\alpha f_2$ if $f_1\le C(\alpha)f_2$. The symbol $o_t(1), t\to\infty$ indicates a quantity that converges to zero when $t\to\infty$.\smallskip

$\bullet$ If $E$ is a set, the symbol $\chi_E$ denotes its characteristic function and $|E|$ indicates its Lebesgue measure.\smallskip

$\bullet$ The class $\Sch(\R)$ is the class of Schwartz functions on $\R$, $C_c^\infty(\R)$ is the class of infinitely smooth functions on $\R$ with compact support, and $C^\infty(\T)$ is the class of infinitely smooth functions on $\T$. The symbol $C(\T)$ denotes the space of continuous functions on $\T$. The class $\Sch(\Z)=\{F=\{F_n\}|_{n\in \Z}: |F_n|\le_{\ell} (1+|n|)^{-\ell}, \forall n\in \Z, \ell\in \Z^+\}$ is the Schwartz class on $\Z$. We write $\Sch^*(\Z):=\{F\in \Sch(\Z):  \|F\|_{\ell^\infty(\Z)}<1\}$.  The symbol $A(\D)$ denotes the class of functions analytic in $\D$ and continuous in $\overline{\D}$.\smallskip

$\bullet$ We write $f^{(*)}(z):=\overline{f(\bar{z}^{-1})}$ and $f^\#(k):=\overline{f(\bar{k})}$. For any polynomial $Q$, we let $Q^*:=z^n\overline{Q(\bar{z}^{-1})}, n\in \Z^+$. \smallskip

$\bullet$ For a sequence $F=\{F_n\}|_{n\in \Z}$ defined on $\Z$, $N\in \Z^+$, and $M\in \Z$, we write $F^{\langle N\rangle}=F\cdot \chi_{|n|\le N}$ for truncation and $F_{\to M}: (F_{\to M})_n=F_{n-M}$ for translation. Also, we denote $F^{\le M}:=F\cdot \chi_{n\le M}$.
\smallskip

$\bullet$ The symbol $m$ often stands for the normalized probability Lebesgue measure on $\T$. For  $f\in L^1(\T)$, we define the Fourier transform by $\widehat f(n):=\int_{\T}f(x)\exp(-inx)dm, n\in \Z$.\smallskip

$\bullet$ In the text below, $\nu, n^*\in \N$ will be taken as  large parameters and $\delta,\epsilon$ will be small positive parameters. 
\bigskip

\section{Proofs of Theorem \ref{t1} and Theorem \ref{t4}.}

Suppose $w: \Z\to [1,\infty)$ satisfies
\begin{eqnarray*}
w(-n)=w(n),\\
w(n_1+n_2)\le w(n_1)w(n_2),\\
|n|^{-1}\log w(n)\to 0, \quad |n|\to\infty\,.
\end{eqnarray*}
We call these  $w$  strong Beurling weights \cite{bs}, p.306. The standard example is $w=(1+|n|)^{\beta}, \beta\ge 0$. One says $f\in A_w(\T)$ if
\[
\|f\|_{A_w(\T)}:=\|\widehat f\|_{\ell^1_w(\Z)}:=\sum_{n\in \Z}w(n)|\widehat f(n)|<\infty\,.
\]
In the case when $F\in \ell^1_w(\Z)$ and $w$ is a Beurling weight, we have $\frak{b}(z,F^{\langle N\rangle})\to \frak{b}(z,F)$ and $\frak{a}(z,F^{\langle N\rangle})\to \frak{a}(z,F)$
 for $N\to\infty$ and the convergence is understood in the Wiener's algebra $A_w(\T)$. That allows us to define the NLFT for $F\in \Sch^*(\Z)$ if we choose $w=(1+|n|)^\beta, \forall \beta\ge 0$. \medskip
 
\noindent{\bf Definition.} For $F\in \Sch^*(\Z)$ the NLFT is the map \[
F\mapsto \overbrace F:=\left(\begin{array}{c}
\frak{a}(z,F)\\
\frak{b}(z,F) 
\end{array}\right)\,.
\]
If $F\in \Sch^*(\Z)$, the function $\frak{a}^{(*)}(z,F)\in C^\infty(\T)$ satisfies the following properties:\smallskip

(A) We have $|\frak{a}^{(*)}(z,F)|^2=1+|\frak{b}(z,F)|^2$ for $z\in \T$.\smallskip

(B) $\frak{a}^{(*)}(z,F)\in A(\D)$ is outer function in $\D$ and 
\begin{equation}\label{out1}
\frak{a}^{(*)}(z,F)=\exp\left(   \int_{\T}\frac{\xi+z}{\xi-z} \log|\frak{a}^{(*)}(\xi,F)|dm \right)=\exp\left( \frac 12  \int_{\T}\frac{\xi+z}{\xi-z} \log(1+|\frak{b}(\xi,F)|^2)dm \right), 
\end{equation}
for $z\in \D$.
\smallskip

(C) (nonlinear Plancherel identity \cite{tt,kov})
\begin{equation}\label{sumrule}
-\sum_{n\in \Z}\log(1-|F_n|^2)=2\int_{\T}\log |\frak{a}^{(*)}(\xi,F)|dm=\int_{\T}\log (1+|\frak{b}(\xi,F)|^2)dm\,.
\end{equation}

We need the following two results (see \cite{AMT24}, p. 314 in \cite{bs}, and \cite{gev} for $SU(2)$ case, the same proof works for $SU(1,1)$). The first one is important for solving the Ablowitz-Ladik equation (\cite{al,al1}) via the inverse scattering approach.
\begin{theorem}\label{ht1}The map $F\mapsto \frak{b}(z,F)$ is bijective from $\Sch^*(\Z)$ to $C^\infty(\T)$.\label{t11}
\end{theorem}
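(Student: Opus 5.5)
The plan is to prove injectivity and surjectivity separately, reducing both to the layer-stripping structure of the recursion \eqref{rec5}, combined with the Riemann--Hilbert/OPUC inverse problem. Throughout, $F\in \Sch^*(\Z)$ and, by the Beurling-weight convergence discussed above, $\frak{b}(z,F)$ and $\frak{a}(z,F)$ lie in $A_w(\T)$ for every $w=(1+|n|)^\beta$. Hence $\frak{b}(\cdot,F)\in C^\infty(\T)$, so the map is well defined.

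\emph{Injectivity.} Given $\frak{b}\in C^\infty(\T)$, property (A) determines $|\frak{a}^{(*)}|$ on $\T$. Property (B), the outer representation \eqref{out1}, then determines $\frak{a}^{(*)}$ itself. Thus $\frak{b}$ determines the whole scattering data $(\frak{a},\frak{b})$, and hence the matrix $\widetilde X_\infty$. Next I would recover $F$ by layer stripping. Write $F=F^{\le M}+(F-F^{\le M})$; the matrix $\widetilde X_\infty$ factors as the product of the transfer matrices of the two pieces. Analyticity properties decide which piece contributes which Fourier modes. Since $\frak{a}^{(*)}\in A(\D)$ with $\frak{a}^{(*)}(0)=\prod(1-|F_n|^2)^{-1/2}$, the ratio $\frak{r}=\frak{b}/\frak{a}^{(*)}$ is smooth with $|\frak{r}|<1$. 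One then checks that the lowest-index coefficient $F_n$ can be read off from $\frak{r}$, as the Schur-type first parameter after shifting by $z^{n}$. The relevant quantities are $\frak{b}\,\frak{a}^{(*)-1}$ and the Fourier support of $\frak{b}$ on the half-lines: for finitely supported $F$ one checks that $\frak{b}$ has Fourier support in $[\min\supp F,\max\supp F]$. Stripping one layer at a time, by multiplying $\widetilde X_\infty$ by $\Omega_n^{-1}$ from the appropriate side, recovers $F$ coefficient by coefficient. For two-sided $F$, I would instead split $\frak{r}$ into the parts coming from $n\ge0$ and $n<0$ via a Riemann--Hilbert factorization $\widetilde X(F)=\widetilde X(F_{\ge0})\widetilde X(F_{<0})$, which is unique by the $J$-unitarity together with the analyticity/normalization of the factors (Liouville-type argument).

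\emph{Surjectivity.} Given $\frak{b}\in C^\infty(\T)$, I define $\frak{a}^{(*)}$ by \eqref{out1}; it is smooth, zero-free and outer because $\log(1+|\frak{b}|^2)\in C^\infty$. Then $\frak{r}=\frak{b}/\frak{a}^{(*)}$ is smooth with $\|\frak{r}\|_\infty<1$. I split $\frak{r}$ into its nonnegative-frequency and negative-frequency factors by solving a matrix Riemann--Hilbert factorization. Each half-line problem is then the OPUC inverse problem. The function $(1-|\frak{r}|^2)$ (positive and smooth) gives a measure $w\,dm$ with smooth positive weight, and Baxter/Szeg\H{o} theory with smooth weights yields Verblunsky coefficients in $\Sch$ with $|F_n|<1$. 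One then verifies that the resulting $F$ has NLFT equal to $\frak{b}$, using injectivity and continuity in $A_w$.

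\emph{Main obstacle.} The hard step is the Riemann--Hilbert splitting of a two-sided smooth $\frak{r}$ into half-line pieces with Schwartz data, together with its solvability and smoothness. I would first try the standard route: solvability via positivity from $\|\frak{r}\|_\infty<1$, in the form of a Toeplitz/Hankel operator $I-H_{\frak r}H_{\frak r}^*$ that is invertible; regularity via Wiener--L\'evy in the Beurling algebras $A_w$. This is where the cited $SU(2)$ arguments (\cite{AMT24,gev}) transfer essentially verbatim.
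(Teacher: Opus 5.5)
Your outline follows the standard argument behind the references the paper cites in place of a proof: reconstruct $\frak{a}^{(*)}$ as an outer function from $|\frak{b}|$, split the two-sided problem into half-line pieces by a Riemann--Hilbert factorization whose solvability and uniqueness come from $\|\frak{r}\|_{\infty}<1$, and treat each half-line piece with the Schur algorithm plus Baxter's theorem for smooth positive weights, so it is essentially the same approach. One slip needs fixing: a half-line piece is not determined by $1-|\frak{r}|^2$, which loses the phase; the measure must come from the analytic half-line coefficient $\frak{r}_+=\frak{b}_+/\frak{a}_+^{(*)}$ itself via $w=(1-|\frak{r}_+|^2)/|1+\frak{r}_+|^2$ (after shifting the piece onto $\N$, as in the proof of Theorem~\ref{tbig}), and for this you should also check that the factors $\frak{a}_\pm^{(*)}$ produced by the splitting are zero-free in $\D$, e.g.\ by Rouch\'e applied to $\frak{a}^{(*)}=\frak{a}_+^{(*)}\frak{a}_-^{(*)}+\frak{b}_+\frak{b}_-^{(*)}$.
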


\begin{theorem}\label{invt} There is $\tau_0>0$ such that $\|F\|_{\ell_1(\Z)}\le \tau_0 \Rightarrow \|\frak{b}(\cdot,F)\|_{A_1(\T)}\lesssim \|F\|_{\ell_1(\Z)}$ and $\|\frak{b}\|_{A_1(\T)}\le \tau_0 \Rightarrow \|F(\frak{b})\|_{\ell_1(\Z)}\lesssim \|\frak{b}\|_{A_1(\T)}$.
\end{theorem}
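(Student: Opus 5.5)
The plan is to prove Theorem~\ref{invt} with a perturbative argument in the Banach algebra $A_1(\T)$, where $A_1$ means $A_w$ with $w(n)=1+|n|$. The forward bound comes first. Write $\widetilde X$ as a time-ordered product of the factors in \eqref{rec5}, or equivalently as the Dyson/Picard series
\[
\frak{b}(z,F)=\sum_{k\ \mathrm{odd}}\ \sum_{n_1<n_2<\dots<n_k} c_{n_1,\dots,n_k}(F)\, z^{n_k-n_{k-1}+\dots+n_1},
\]
suitably normalized by the factors $(1-|F_n|^2)^{-1/2}$. Each term of order $k$ is a monomial in $z$ with exponent $m=n_k-n_{k-1}+\dots\pm n_1$.

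The main input is $w(m)\le\prod_j w(n_j)$. This follows from submultiplicativity and symmetry of the weight $w(n)=1+|n|$. It gives
\[
\|\text{$k$-th term}\|_{A_1}\le C^k\|F\|_{\ell^1_w}^k/k!
\]
(or even without the $1/k!$ gain, a geometric bound $\le (C\|F\|)^k$), and the normalizing product satisfies $\prod(1-|F_n|^2)^{-1/2}\le e^{C\|F\|_{\ell^2}^2}$. Summing over $k$ for small norm yields $\|\frak{b}\|_{A_1}\lesssim\|F\|$, with the linear term $\sum F_n z^n$ dominating.

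I read $\|F\|_{\ell_1(\Z)}$ in the statement as the weighted norm $\ell^1_w$ matching $A_1$. If it is unweighted, the weighted version still follows by the same argument.

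For the inverse direction, I will set up the map $\Phi:F\mapsto \frak{b}(\cdot,F)$ on the ball $\{\|F\|_{\ell^1_w}\le\tau_0\}$ and write $\Phi(F)=\mathcal{F}F+R(F)$. Here $\mathcal{F}F=\sum F_nz^n$ is the linear Fourier map, which is an isometry $\ell^1_w\to A_1$. The remainder $R$ collects the terms of order $\ge 3$ together with the normalization corrections.

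The key step is a Lipschitz estimate
\[
\|R(F)-R(G)\|_{A_1}\le C(\|F\|+\|G\|)^2\|F-G\|.
\]
I get it from the multilinear expansion: a difference of products telescopes into sums in which one entry is $F-G$, and each such sum is bounded exactly as in the forward step. Given $\frak{b}$ with $\|\frak{b}\|_{A_1}\le\tau_0$, I solve $F=\mathcal{F}^{-1}(\frak{b}-R(F))$ by a contraction mapping on the ball of radius $2\|\frak{b}\|$. This produces $F(\frak{b})$ with $\|F(\frak{b})\|\le 2\|\frak{b}\|$.

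I also need this $F$ to agree with the inverse given by Theorem~\ref{ht1}. Uniqueness of the fixed point gives local injectivity, and for smooth $\frak{b}$ the Schwartz preimage lies in the small ball by the forward estimate, so the two inverses coincide.

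The main obstacle is the combinatorial control of the nonlinear terms in the weighted norm uniformly in $k$, especially the ordering constraint and the sign-alternating exponents. Submultiplicativity of $w$ is what makes this work, and I would handle it first. A further subtle point is making sure that $|F_n|<1$ is automatic on the small ball: this holds since $\tau_0<1$ gives $\|F\|_\infty\le\tau_0$.
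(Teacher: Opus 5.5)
The paper does not prove this statement; it defers to \cite{AMT24}, \cite{bs}, \cite{gev}. Your argument therefore has to stand on its own, and most of it does. The odd-order Dyson terms are controlled by $w(m)\le\prod_j w(n_j)$, the remainder $R$ is cubically Lipschitz, and $\mathcal F$ is an isometry of $\ell^1_w$ onto $A_w$. So the contraction does produce $F$ with $\frak{b}(\cdot,F)=\frak{b}$ and $\|F\|\le 2\|\frak{b}\|$.

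A minor correction first. $A_1(\T)$ here is the unweighted Wiener algebra ($w\equiv 1$), matching the unweighted $\ell_1(\Z)$. This is forced by the use in \eqref{cont9}: there $\|b_j\|_{A_1}\lesssim\delta$ must hold uniformly in $\nu$. With weight $1+|n|$, the norm of $\delta\rho(\nu(\phi-\phi_j))$ is of order $\delta\nu$. Your constants depend only on symmetry, submultiplicativity and $w\ge 1$, so the argument covers $w\equiv 1$ verbatim.

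The genuine gap is identifying your fixed point with $F(\frak{b})$, the $\Sch^*(\Z)$-preimage $q$ from Theorem \ref{ht1}. This is exactly what \eqref{cont9} uses, and the paper later needs $q^{(j)}\in\Sch^*(\Z)$. Your justification is circular. The forward estimate says $\|F\|\le\tau_0\Rightarrow\|\frak{b}(\cdot,F)\|\lesssim\|F\|$. It says nothing about $\|q\|_{\ell^1}$ when only $\|\frak{b}\|_{A}$ is known to be small, and that a priori smallness is precisely the inverse estimate being proved. Local injectivity on a small $\ell^1$-ball does not rule out a second preimage outside that ball.

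You can close the gap with your own machinery.
\begin{itemize}
\item For $\beta\ge 0$ and $\lambda\in(0,1]$, the weight $w(n)=(1+\lambda|n|)^\beta$ is a strong Beurling weight, so your contraction runs in $\ell^1_w$ with the same constants.
\item If $\frak{b}\in C^\infty(\T)$ and $\|\frak{b}\|_{A}\le\tau_0/2$, dominated convergence gives $\|\frak{b}\|_{A_w}\le\tau_0$ for $\lambda=\lambda(\beta)$ small enough.
\item The weighted fixed point and your $\ell^1$ fixed point both lie in a small $\ell^1$-ball. On that ball your Lipschitz bound makes $F\mapsto\frak{b}(\cdot,F)$ injective, so the two coincide.
\item Hence your $F$ lies in $\ell^1_{(1+|n|)^\beta}$ for every $\beta$, and $\|F\|_\infty<1$, so $F\in\Sch^*(\Z)$.
\item The injectivity part of Theorem \ref{ht1} then gives $F=q$.
\end{itemize}
Alternatively, quote injectivity of the NLFT on $\{F\in\ell^1(\Z):\|F\|_\infty<1\}$ from the cited literature.
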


We will be using the following notation: given $N,M\in \Z$ and a sequence $F=\{F_n\}|_{n\in \Z}$, we write
\[
F_{\to N}: (F_{\to N})_n=F_{n-N}, \forall n\in \Z\,, F^{\le M}:=F\cdot \chi_{n\le M}\,.
\]
One has the following lemma.
\begin{lemma}
Suppose $F$ and $G$ are compactly supported sequences on $\Z$, $\supp F\subset (-\infty,N_1]$, $\supp G\subset [-N_2,\infty)$, and $N>N_1+N_2$. Then, for each $M\ge -N_2$, we have
\begin{eqnarray}\label{mul13}
\quad\quad\frak{a}^{(*)}(z,F+(G^{\le M})_{\to N})=\frak{a}^{(*)}(z,F)\frak{a}^{(*)}(z,G^{\le M})+z^{N}\frak{b}^{(*)}(z,F)\frak{b}(z,G^{\le M}),\\
\frak{b}(z,F+(G^{\le M})_{\to N})=z^{N}\frak{b}(z,G^{\le M})\frak{a}(z,F)+\frak{b}(z,F)\frak{a}^{(*)}(z,G^{\le N}),\label{mul23}\\
|\frak{r}(z,F+(G^{\le M})_{\to N})-\frak{r}(z,F)|\le{|\frak{r}(z,G^{\le M})|}/({1-|\frak{r}(z,G^{\le M})|}),\, \quad z\in \T\,.\label{mul33}
\end{eqnarray}
\end{lemma}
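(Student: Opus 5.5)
The plan is to prove the two product formulas \eqref{mul13}--\eqref{mul23} from a factorization of the transfer matrices of the normalized recursion \eqref{rec5}, and then to read off \eqref{mul33} from a M\"obius-type formula for $\frak{r}$.

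\textbf{Step 1: factorization.} Put $G':=(G^{\le M})_{\to N}$. Since $M\ge -N_2$, we have $\supp G^{\le M}\subset[-N_2,M]$, so $\supp G'\subset[N-N_2,\infty)$. Because $N-N_2>N_1$, this support lies strictly to the right of $\supp F$. In \eqref{rec5} the factors are multiplied in increasing $n$ from right to left, and each factor is the identity whenever the coefficient vanishes. Therefore the full product for $F+G'$ splits as
\[
\widetilde X_\infty(z,F+G')=\widetilde X_\infty(z,G')\,\widetilde X_\infty(z,F).
\]
Everything involved is a finite product of Laurent polynomials in $z^{1/2}$-free form, so I would verify the matrix identities for $z\in\T$ and then extend them to all $z\neq 0$ by analytic continuation.

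\textbf{Step 2: the effect of translation.} Write $m=n-N$. The $n$-th factor for $G'$ has off-diagonal entries $\overline{G_m}z^{-m}z^{-N}$ and $G_mz^{m}z^{N}$. Hence it equals $D\,\Omega D^{-1}$, where $D=\mathrm{diag}(1,z^N)$ and $\Omega$ is the $m$-th factor for $G^{\le M}$. Conjugation passes through the whole product, so
\[
\widetilde X_\infty(z,G')=D\,\widetilde X_\infty(z,G^{\le M})\,D^{-1}=\begin{pmatrix}\frak{a}_G& z^{-N}\frak{b}^{(*)}_G\\ z^N\frak{b}_G&\frak{a}^{(*)}_G\end{pmatrix},
\]
where all $G$-entries are evaluated at $G^{\le M}$. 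Multiplying this matrix by $\widetilde X_\infty(z,F)$ and reading off the second row gives \eqref{mul13} and \eqref{mul23}. The symbol $G^{\le N}$ in \eqref{mul23} should read $G^{\le M}$.

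\textbf{Step 3: the estimate \eqref{mul33}.} Take $z\in\T$. There $\frak{a}_F=\overline{\frak{a}^{(*)}_F}$ and $\frak{b}^{(*)}_F=\overline{\frak{b}_F}$. Divide \eqref{mul23} by \eqref{mul13}, and divide numerator and denominator by $\frak{a}^{(*)}_G\frak{a}^{(*)}_F$, which is nonzero since $|\frak{a}^{(*)}|\ge1$. This yields
\[
\frak{r}(z,F+G')=\frac{\frak{r}_F+u}{1+u\,\overline{\frak{r}_F}},\qquad u:=z^N\frak{r}_G\,\frac{\overline{\frak{a}^{(*)}_F}}{\frak{a}^{(*)}_F},\quad |u|=|\frak{r}_G|<1.
\]
Subtracting $\frak{r}_F$ gives
\[
\frak{r}(z,F+G')-\frak{r}_F=\frac{u(1-|\frak{r}_F|^2)}{1+u\overline{\frak{r}_F}}.
\]
Finally use $|\frak{r}_F|<1$, so that $1-|\frak{r}_F|^2\le1$ and $|1+u\overline{\frak{r}_F}|\ge1-|u|$. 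This gives the bound $|u|/(1-|u|)=|\frak{r}_G|/(1-|\frak{r}_G|)$, which is \eqref{mul33}.

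\textbf{Main obstacle.} The only delicate point is the bookkeeping in Steps 1--2. One must check the order of multiplication (the factor for $G'$ stands on the left) and the exact power $z^{\pm N}$ produced by the conjugation, since these determine where $z^N$ appears in \eqref{mul13}--\eqref{mul23}. The remaining steps are direct computation.
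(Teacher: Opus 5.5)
Your proposal is correct and follows essentially the paper's route: the transfer-matrix product for $F+(G^{\le M})_{\to N}$ splits because the supports are disjoint, and the translation rule \eqref{obbs} then gives \eqref{mul13}--\eqref{mul23}; you verify \eqref{obbs} directly by conjugating with $\mathrm{diag}(1,z^N)$ rather than citing \cite{tt}. Dividing the two identities gives the paper's M\"obius formula, since your numerator $u(1-|\frak{r}(z,F)|^2)$ equals the paper's $z^N\frak{r}(z,G^{\le M})/(\frak{a}^{(*)}(z,F))^2$ by \eqref{sul}, and you are right that $G^{\le N}$ in \eqref{mul23} should read $G^{\le M}$.
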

\begin{proof}
We start with the first observation  (p.6, \cite{tt}) that for each $H$ with compact support, we have
\begin{equation}\label{obbs}
\frak{a}^{(*)}(z,H_{\to M})=\frak{a}^{(*)}(z,H),\quad
\frak{b}(z,H_{\to M})=\frak{b}(z,H)z^{M}, \quad \forall M\in \Z\,.
\end{equation}
Next, notice that $\supp F\cap \supp ((G^{\le M})_{\to N})=\emptyset$ so \eqref{rec5} and \eqref{obbs} give the first two identities in the lemma. Dividing the second by the first, one has
\[
\frak{r}(z,F+(G^{\le M})_{\to N})=\frac{\frak{r}(z,F)+z^{N}\frak{r}(z,G^{\le M})\frak{a}(z,F)/\frak{a}^{(*)}(z,F)}{1+z^{N}\frak{r}(z,G^{\le M})\frak{b}^{(*)}(z,F)/\frak{a}^{(*)}(z,F)}\,,
\]
so
\begin{eqnarray*}
\frak{r}(z,F+(G^{\le M})_{\to N})-\frak{r}(z,F)=
z^{N}\frak{r}(z,G^{\le M})\frac{\frak{a}(z,F)/\frak{a}^{(*)}(z,F)-\frak{r}(z,F)\frak{b}^{(*)}(z,F)/\frak{a}^{(*)}(z,F)}{1+z^{N}\frak{r}(z,G^{\le M})\frak{b}^{(*)}(z,F)/\frak{a}^{(*)}(z,F)}\\
\stackrel{\eqref{sul}}{=}
\frac{z^{N}\frak{r}(z,G^{\le M})}{(\frak{a}^{(*)}(z,F))^2}\frac{1}{1+z^{N}\frak{r}(z,G^{\le M})\frak{b}^{(*)}(z,F)/\frak{a}^{(*)}(z,F)}
\end{eqnarray*}
and, recalling that $|\frak{a}^{(*)}(z,F)|\ge 1$, $|\frak{r}(z,F)|<1$, and $|\frak{b}^{(*)}(z,F)/\frak{a}^{(*)}(z,F)|<1$ for $z\in \T$, we get
\begin{eqnarray*}
|\frak{r}(z,F+(G^{\le M})_{\to N})-\frak{r}(z,F)|\le{|\frak{r}(z,G^{\le M})|}/{(1-|\frak{r}(z,G^{\le M})|)},\, \quad z\in \T\,.
\end{eqnarray*}
\end{proof}

\begin{lemma} \label{ton1} Suppose $F,G\in \Sch^*(\Z)$ and $N\in \N$. Then, as $N\to\infty$
\begin{eqnarray}\label{f1}
\frak{a}^{(*)}(z,F^{\langle N\rangle}+(G^{\langle N\rangle})_{\to 3N})=\frak{a}^{(*)}(z,F)\frak{a}^{(*)}(z,G)+z^{3N}\frak{b}^{(*)}(z,F)\frak{b}(z,G)+o(1), \\
\frak{b}(z,F^{\langle N\rangle}+(G^{\langle N\rangle})_{\to 3N})=z^{3N}\frak{b}(z,G)\frak{a}(z,F)+\frak{b}(z,F)\frak{a}^{(*)}(z,G)+o(1)\label{f2},\\
\limsup_{N\to\infty}|\frak{r}(z,F^{\langle N\rangle}+(G^{\langle N\rangle})_{\to 3N})-\frak{r}(z,F^{\langle N\rangle})|\le{|\frak{r}(z,G)|}/{(1-|\frak{r}(z,G)|)}\label{last1}
\end{eqnarray}
uniformly in $z\in \T$.
\end{lemma}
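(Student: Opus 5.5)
We reduce Lemma~\ref{ton1} to the previous lemma, applied to the truncated sequences, and then pass to the limit $N\to\infty$ using the convergence of the truncated NLFT to the full NLFT.

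\emph{Applying the previous lemma.} Take the previous lemma with $F$ replaced by $F^{\langle N\rangle}$ and $G$ by $G^{\langle N\rangle}$. Then $\supp F^{\langle N\rangle}\subset(-\infty,N]$ and $\supp G^{\langle N\rangle}\subset[-N,\infty)$, so $N_1=N_2=N$. The shift $3N$ exceeds $N_1+N_2=2N$, and we take $M=N$, so that $(G^{\langle N\rangle})^{\le N}=G^{\langle N\rangle}$. Identities \eqref{mul13}, \eqref{mul23} then give exactly
\[
\frak{a}^{(*)}(z,F^{\langle N\rangle}+(G^{\langle N\rangle})_{\to 3N})=\frak{a}^{(*)}(z,F^{\langle N\rangle})\frak{a}^{(*)}(z,G^{\langle N\rangle})+z^{3N}\frak{b}^{(*)}(z,F^{\langle N\rangle})\frak{b}(z,G^{\langle N\rangle}),
\]
and the analogous formula for $\frak{b}$ (I read the $G^{\le N}$ in \eqref{mul23} as $G^{\le M}$). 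Similarly, \eqref{mul33} gives for each $N$ the bound
\[
|\frak{r}(z,F^{\langle N\rangle}+\cdots)-\frak{r}(z,F^{\langle N\rangle})|\le |\frak{r}(z,G^{\langle N\rangle})|/(1-|\frak{r}(z,G^{\langle N\rangle})|).
\]

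\emph{Passing to the limit.} Since $F,G\in \Sch^*(\Z)\subset \ell^1(\Z)$, the convergence stated for $\ell^1_w$ with $w\equiv1$ gives $\frak{a}(\cdot,F^{\langle N\rangle})\to\frak{a}(\cdot,F)$ and $\frak{b}(\cdot,F^{\langle N\rangle})\to\frak{b}(\cdot,F)$ in $A(\T)$, hence uniformly on $\T$; the same holds for $G$. On $\T$ we have $f^{(*)}=\bar f$, so the starred quantities converge uniformly as well. All functions involved are uniformly bounded on $\T$: $|\frak{a}|^2=1+|\frak{b}|^2$, and $\frak{b}$ is bounded in $A(\T)$ along the sequence. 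The factor $z^{3N}$ is unimodular. Therefore each product differs from its limit counterpart by $o(1)$ uniformly in $z$, and since multiplication by $z^{3N}$ preserves uniform smallness, \eqref{f1} and \eqref{f2} follow.

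\emph{The bound for $\frak{r}$, which is the main point of care.} We need uniform convergence $\frak{r}(\cdot,G^{\langle N\rangle})\to\frak{r}(\cdot,G)$ on $\T$ together with a uniform gap $\sup_{z\in\T}|\frak{r}(z,G)|<1$. Both come from the fact that $|\frak{a}^{(*)}|\ge 1$ on $\T$, so the quotient $\frak{b}/\frak{a}^{(*)}$ converges uniformly. Moreover $|\frak{r}(z,G)|^2=|\frak{b}|^2/(1+|\frak{b}|^2)$ with $\frak{b}(\cdot,G)$ continuous on the compact set $\T$, hence bounded by some $\rho<1$. The map $t\mapsto t/(1-t)$ is uniformly continuous on $[0,(1+\rho)/2]$, and for large $N$ the values $|\frak{r}(z,G^{\langle N\rangle})|$ lie in this interval. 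So the right-hand side of \eqref{mul33} converges uniformly to $|\frak{r}(z,G)|/(1-|\frak{r}(z,G)|)$. Taking $\limsup$ yields \eqref{last1} uniformly in $z\in\T$.
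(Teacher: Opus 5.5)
Your proof is correct and follows the paper's argument. You apply the preceding lemma to $F^{\langle N\rangle}$ and $(G^{\langle N\rangle})_{\to 3N}$ with $N_1=N_2=M=N$, then pass to the limit using the uniform convergence of the truncated NLFT for $F,G\in\Sch^*(\Z)$. Your extra care with \eqref{last1}, namely uniform convergence of $\frak{r}(\cdot,G^{\langle N\rangle})$ together with the uniform gap $\sup_{\T}|\frak{r}(\cdot,G)|<1$, fills in a step the paper leaves implicit.
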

\begin{proof}
Given $N$, consider truncated coefficients $F^{\langle N\rangle}$ and $G^{\langle N\rangle}$ and apply the previous lemma to get
\begin{eqnarray}\label{mul1}
\quad\quad\frak{a}^{(*)}(z,F^{\langle N\rangle}+(G^{\langle N\rangle})_{\to 3N})=\frak{a}^{(*)}(z,F^{\langle N\rangle})\frak{a}^{(*)}(z,G^{\langle N\rangle})+z^{3N}\frak{b}^{(*)}(z,F^{\langle N\rangle})\frak{b}(z,G^{\langle N\rangle}),\\
\frak{b}(z,F^{\langle N\rangle}+(G^{\langle N\rangle})_{\to 3N})=z^{3N}\frak{b}(z,G^{\langle N\rangle})\frak{a}(z,F^{\langle N\rangle})+\frak{b}(z,F^{\langle N\rangle})\frak{a}^{(*)}(z,G^{\langle N\rangle}),\label{mul2}\\
|\frak{r}(z,F^{\langle N\rangle}+(G^{\langle N\rangle})_{\to 3N})-\frak{r}(z,F^{\langle N\rangle})|\le{|\frak{r}(z,G^{\langle N\rangle})|}/{(1-|\frak{r}(z,G^{\langle N\rangle})|)}\,.
\end{eqnarray}
Since $F,G\in \Sch^*(\Z)$, we have for $N\to\infty$
\begin{eqnarray*}
\frak{a}(z,G^{\langle N\rangle})=\frak{a}(z,G)+o(1),\quad \frak{a}(z,F^{\langle N\rangle})=\frak{a}(z,F)+o(1),\\
\frak{b}(z,G^{\langle N\rangle})=\frak{b}(z,G)+o(1),\quad \frak{b}(z,F^{\langle N\rangle})=\frak{b}(z,F)+o(1)
\end{eqnarray*}
uniformly in $z\in \T$. Substituting these identities into the previous formulas gives the statement of the lemma.
\end{proof}

Given $F\in \Sch^*(\Z)$, the function $a^{(*)}(z,F)\in A(\D)$ is outer in $\D$, $a^{(*)}(z)\in C^\infty(\T)$, and the formula \eqref{out1} holds. So, the argument of $a^{(*)}(z,F)$ is well-defined by the formula
\[
\arg \frak{a}^{(*)}(z,F)=\int_{\T}\Im\left(\frac{\xi+z}{\xi-z}\right) \log|\frak{a}^{(*)}(\xi,F)|dm = \frac 12   \int_{\T}\Im \left(\frac{\xi+z}{\xi-z}\right) \log(1+|\frak{b}(\xi,F)|^2)dm\,.
\]
These formulas extend to $z\in \T$ where the integrals are represented by the Hilbert transform
\begin{equation}\label{hilb}
\arg \frak{a}^{(*)}(e^{i\phi},F)=\frac{1}{4\pi}\int_{[0,2\pi)} \cot\left(\frac{\phi-\theta}{2}\right) \cdot \log(1+|\frak{b}(e^{i\theta},F)|^2)d\theta\,,
\end{equation}
which is understood in v.p. sense. \smallskip

\noindent {\bf Remark.} To visualize, consider a straight segment $\{re^{i\phi}, r\in [0,1]\}$ connecting the origin to a point $e^{i\phi}$ on $\T$. The corresponding image, i.e., the curve $\{\frak{a}^{(*)}(re^{i\phi},F), r\in [0,1]\}$, lies outside $\D$ and it connects a point $\frak{a}^{(*)}(0,F)=\prod_{n\in \Z}(1-|F_n|^2)^{-\tfrac 12}\in [1,\infty)$ to a point $\frak{a}^{(*)}(e^{i\phi},F)$. The total variation of the argument of a point on that curve is  equal to $\arg \frak{a}^{(*)}(e^{i\phi},F)$, the quantity we are interested in.\smallskip

First, we address the following variational problem for the maximal function. Suppose $I\subset \T, I\neq \T$ is an arc. For every positive $\omega$, we define
\[
\frak{F}_{I,\omega}:=\sup_{F\in \Sch^*(\Z), \|F\|_{\ell^2(\Z)}\le \omega}\inf_{z\in I} \sup_{N\in \Z^+}|\arg a^{(*)}(z,F^{\langle N\rangle})|\,.
\]
Clearly, $\frak{F}_{I,\omega_1}\le \frak{F}_{I,\omega_2}$ if $\omega_1\le \omega_2$ and $\frak{F}_{I_1,\omega}\ge \frak{F}_{I_2,\omega}$ if $I_1\subset I_2$.

\begin{theorem}For every $I$ and $\omega$, we have \,\,$\frak{F}_{I,\omega}=+\infty$.\label{gothg}
\end{theorem}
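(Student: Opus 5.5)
The plan is to reduce the statement to a problem about the conjugate function. By \eqref{hilb}, $\arg \frak{a}^{(*)}(z,F)$ is the Hilbert transform of $h:=\tfrac12\log(1+|\frak{b}(\cdot,F)|^2)$. By \eqref{sumrule}, $\|h\|_{L^1(\T)}$ is comparable to $\|F\|^2_{\ell^2(\Z)}$ when the latter is small. Theorem~\ref{ht1} says that any smooth $\frak{b}$ is realised by some $F\in\Sch^*(\Z)$.

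\emph{Building block.} First I would construct a single spike. Take a point $e^{i\theta_0}$ and a smooth nonnegative function $h_0$ supported on a short arc lying on one side of $\theta_0$, for instance $h_0\approx \epsilon\,(\theta-\theta_0)^{-1}$ on $(\theta_0+\delta,\theta_0+\delta')$. Then $\|h_0\|_{L^1}\approx \epsilon\log(\delta'/\delta)$, while the Hilbert transform in \eqref{hilb} has a fixed sign and size $\gtrsim \epsilon\log(\delta'/\delta)$ at every point of a small arc to the left of $\theta_0$. Setting $|\frak{b}|^2=e^{2h_0}-1$ (smoothed, with any phase) and applying Theorem~\ref{ht1} gives $G\in\Sch^*(\Z)$ whose $\arg\frak{a}^{(*)}$ is large on a small arc $J$, with controlled $\ell^2$ norm.

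\emph{Covering $I$.} A single function cannot work: by the weak-$L^1$ bound for the conjugate function, a fixed $h$ of mass $\lesssim\omega^2$ has $|\arg\frak{a}^{(*)}|\ge M$ only on a set of measure $\lesssim\omega^2/M$. The $\sup_N$ must therefore carry the argument. I would choose finitely many spikes $G_1,\dots,G_K$, with small arcs $J_k$ covering $I$, and concatenate them with rapidly increasing gaps, $F=G_1+(G_2)_{\to N_2}+\dots$, in the manner of Lemma~\ref{ton1}. The truncation $F^{\langle N\rangle}$ taken right after the $k$-th block then gives
\[
\frak{a}^{(*)}\approx \prod_{j\le k}\frak{a}^{(*)}(z,G_j)+(\text{cross terms } z^{N_j}\frak{b}^{(*)}\frak{b}),
\]
so for $z\in J_k$ we may stop at block $k$. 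The key point is the mass bookkeeping: the spike masses should decay geometrically in $k$, so that the total $\ell^2$ norm is $\le\omega$, while each spike is made sharper ($\delta_k\to0$) so that its logarithmic gain still exceeds $M$ plus the accumulated contributions of the earlier blocks at $z$. Each earlier block contributes $O(\text{mass}_j/\operatorname{dist}(z,\text{spike}_j))$ away from its own support, and these are bounded in terms of data already fixed. Since everything is finite at each stage, the quantifiers can be ordered so that each new block is chosen after the previous ones are fixed.

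\emph{Main obstacle.} The hardest step is controlling the cross terms $z^{N}\frak{b}^{(*)}(F)\frak{b}(G)$ from Lemma~\ref{ton1}. They are not small pointwise when $|\frak{b}|$ is large, so the argument of the concatenation is not simply the sum of the arguments. My first approach is to exploit the oscillating factor $z^{N}$: Lemma~\ref{ton1} allows $N$ to be taken arbitrarily large. I would argue that either the cross term does not spoil $|\arg|$ at $z$ for the chosen truncation, or one can pass to a nearby truncation, or a different shift $N$, so that $\operatorname{Re}\bigl(z^N\cdot\text{cross}\bigr)$ has a favourable sign. This uses the freedom in $\sup_N$ and the fact that $|\frak{a}^{(*)}|\ge1$ keeps the curve away from $0$. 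If that fails, the fallback is to keep each $\frak{b}(G_k)$ small in $A_1(\T)$, using Theorem~\ref{invt}, so that the cross terms are uniformly small. The logarithmic divergence would then have to be produced by the product $\prod_k\frak{a}^{(*)}(z,G_k)$ over many blocks rather than by any single spike.
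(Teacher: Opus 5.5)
\textbf{There is a genuine gap in the covering step, and it defeats the approach as designed.}

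Your plan asks each block $G_k$ to produce, by itself, $|\arg\frak{a}^{(*)}|\ge M$ on its own arc $J_k$. The earlier blocks are treated only as an error to be overcome. But the weak-type $(1,1)$ bound that you correctly use to rule out a single function applies equally to each block. With $h_k=\tfrac12\log(1+|\frak{b}(\cdot,G_k)|^2)$ you get $|J_k|\le|\{|Hh_k|\ge M\}|\lesssim\|h_k\|_{L^1}/M$. Hence $\sum_k|J_k|\lesssim\omega^2/M$ by \eqref{sumrule}, and the $J_k$ cannot cover $I$ once $M\gg\omega^2/|I|$. Making the spikes sharper only shrinks the $J_k$.

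Your bookkeeping is also internally inconsistent. The ``logarithmic gain'' $\epsilon\log(\delta'/\delta)$ is exactly $\|h_0\|_{L^1}$, so if the masses decay geometrically, the gains decay too.

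The oscillation trick for the cross terms has a similar problem. The shifts are fixed once and for all when $F$ is built, while the phase $z^N$ varies along $I$, so no single shift is favourable on the whole arc.

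\textbf{The missing idea is that the large argument must come cumulatively from the partial products, with each block contributing only $O(\delta^2)$.}

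The paper's construction runs as follows.
\begin{itemize}
\item It takes $\nu$ bumps $b_j=\delta\rho(\nu(\phi-\phi_j))$ with pairwise disjoint supports $\Delta_j$.
\item It realizes each one as $q^{(j)}$ via Theorem~\ref{ht1}. Then $\|q^{(j)}\|_{\ell^2}^2\sim\delta^2/\nu$, so the total is $\sim\delta^2$, independent of $\nu$.
\item It concatenates the $q^{(j)}$ with rapidly growing gaps.
\item Disjointness \eqref{p1}, combined with the inductive bound \eqref{pr2}, makes the cross terms vanish in the iterated limit $T_j\to\infty$. So $\frak{a}^{(*)}(z,F^{(j)})\to\frak{A}_j=\prod_{s\le j}\frak{a}^{(*)}_s$ uniformly on $\overline{\D}$, with no sign considerations needed.
\item Now $\log|\frak{A}_j|$ is a row of adjacent bumps of height $\sim\delta^2$ filling $[0,\phi_{j+1}]$ and vanishing beyond it. Its conjugate function behaves like that of $\delta^2\chi_{[0,\phi_{j+1}]}$, which has a logarithmic singularity at the moving endpoint. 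This gives $|\arg\frak{A}_j|\gtrsim\delta^2\log\nu$ on $\Delta_{j+1}$, see \eqref{lb1}.
\item Truncating right after block $j$ handles $z\in\Delta_{j+1}$. Letting $\nu\to\infty$ with $\delta$ fixed gives $\frak{F}_{I,\omega}=\infty$.
\end{itemize}

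Your fallback (growth coming from the product over many small blocks) points in this direction. However, it contains neither the staircase construction nor the right control of the cross terms. Smallness of each $\frak{b}(\cdot,G_k)$ in $A_1(\T)$ bounds each cross term individually, but not their sum over a number of blocks that must tend to infinity. What is needed is a pointwise bound such as $\sum_k|\frak{b}(z,G_k)|\lesssim\delta$, and the disjoint supports are what provide it.
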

\begin{proof}Without loss of generality, we take $I=\{e^{i\phi}, \phi\in [\pi/2,3\pi/2]\}$. Then, we let $\nu\in \N$ be a large parameter and let $\phi_j=2\pi j/\nu, j\in \{0,\ldots,\nu-1\}$ and $\Delta_j=[\phi_j,\phi_{j+1})$ be the corresponding intervals that provide a partition of $[0,2\pi)$. Let $\rho\in C^\infty(\R)$ be a nonnegative function with $\supp\rho=[0,2\pi]$. Let $\delta$ be a small positive parameter. Define $b_j(e^{i\phi})=\delta\rho(\nu(\phi-\phi_j))$. We clearly have that
\begin{eqnarray}
\supp b_j=\{e^{i\phi}, \phi\in \Delta_j\},\\
\label{p1} b_{j_1}b_{j_2}=0, \, j_1\neq j_2,\\
\label{pok1} |b_{j}|+\ldots+|b_{j+s}|\lesssim \delta, \forall j,s\,.
\end{eqnarray}
Since $b_j\in C^\infty(\T)$, it uniquely defines $q^{(j)}\in \Sch^*(\Z)$ such that $b_j=\frak{b}(z,q^{(j)})$ by Theorem \ref{ht1}. From \eqref{sumrule}, we know that 
\begin{equation}\label{lok1}
\|q^{(j)}\|_{\ell^2(\Z)}^2\sim \delta^2/\nu\,.
\end{equation}
We also have
$
\|b_j\|_{A_1(\T)}\lesssim \delta
$
which, by Theorem \ref{invt}, implies 
\begin{equation}\label{cont9}
\|q^{(j)}\|_{\ell^1(\Z)}\lesssim \delta\,.
\end{equation}
Each $b_j$ defines $\frak{a}_{j}^{(*)}(z)$ by the formula \eqref{out1} and $\frak{a}_{j}^{(*)}(z)=\frak{a}_{j}^{(*)}(z,q^{(j)})$. We introduce the function 
$\frak{A}_{j}(z)=\frak{a}_0^{(*)}(z)\cdot \ldots\cdot \frak{a}_{j}^{(*)}(z), \, j\in \{0,\ldots,\kappa-1\}\,.
$

\begin{lemma}$\frak{A}_{j}$ satisfies the following properties:

\noindent {\rm (A)} $\frak{A}_{j}\in A(\D)$, it is outer and 
\begin{equation}\label{b1}
  |\frak{A}_{j}(z)|\quad \left\{
  \begin{array}{ll}
  \le 1+C\delta^2, &z\in \overline{\D},\\
  =1, &z=e^{i\phi}, \phi\notin \Delta_0\cup\ldots\cup\Delta_j\,.
  \end{array}
  \right.
\end{equation}
{\rm (B)} For the argument of $\frak{A}_j$, we get an estimate:
\begin{equation}\label{lb1}
|\arg \frak{A}_j(e^{i\phi})|\ge \delta^2(C_1\log\nu-C_2), \quad \phi\in \Delta_{j+1}, \quad j\in [\nu/10,9\nu/10]\,.
\end{equation}
\end{lemma}
\begin{proof}The product of outer functions is an outer function and $A(\D)$ is an algebra. Also, from \eqref{p1}, an identity $|\frak{a}(e^{i\phi},q^{(j)})|^2=1+|\frak{b}(e^{i\phi},q^{(j)})|^2$, and the choice of $b_j$, we get \eqref{b1} when $z\in \T$. The application of the  maximum principle extends the bound to $z\in \D$.\vspace{1cm}

\begin{tikzpicture}
    \draw[thick, black] (-0.1,0) -- (14,0);
    \draw[black, fill=black] (1,0) circle (0.4mm);
    \node at (1,-0.4) {$\phi_{j-3}$};
    \draw[black, fill=black] (2,0) circle (0.4mm);
    \node at (2,-0.4) {$\phi_{j-2}$};
    \draw[black, fill=black] (3,0) circle (0.4mm);
    \node at (3,-0.4) {$\phi_{j-1}$};
    \draw[black, fill=black] (4,0) circle (0.4mm);
    \node at (4,-0.4) {$\phi_{j}$};
    \draw[black, fill=black] (5,0) circle (0.4mm);
    \node at (5,-0.4) {$\phi_{j+1}$};
    \node at (6,0.2) {$\phi$};
    \draw[black, fill=black] (5.8,0) circle (0.4mm);
    \node at (7,-1) {Figure 1: $\log |\frak{A}_j|$. Creation of logarithmic growth by piling bumps to the left of $\phi$.};
     \node at (7,-1.5) { The ``height'' of each ``petal'' is $\sim \delta^2$ and its ``width'' is $\sim 1/\nu  $. We have $\phi_s-\phi_{s-1}=(2\pi)/\nu  $\,.};

    \foreach \r in {0,...,4}
    {
    \draw[darkbrown, samples=1000, domain=-1:1, smooth] plot [smooth, solid] coordinates {(\r,0.0) (\r+0.2,0.05)   (\r+0.3, 0.6) 
    (\r+0.4,1.5)  (\r+0.6,1.5)  (\r+0.7,0.6)  (\r+0.8,0.05) (\r+1,0.0)};
    }

    \end{tikzpicture}
\vspace{0.5cm}

The estimate \eqref{lb1} follows from the estimate on the Hilbert transform:
\[
\arg \frak{A}_j(e^{i\phi})\stackrel{\eqref{hilb}}{=}\frac{1}{4\pi}\int_{[0,2\pi)} \cot\left(\frac{\phi-\theta}{2}\right) \cdot \left( \sum_{s=0}^{j}\log(1+|b_s(e^{i\theta})|^2)\right)d\theta
\]
after we notice that
\[
\sum_{s=0}^{j}\log(1+|b_s(e^{i\theta})|^2)\quad
\left\{
\begin{array}{ll}
\ge 0, &\theta\in [0,2\pi),\\
\le C\delta^2,& \theta\in [0,2\pi),\\
=0,& \theta\in [\phi_{j+1},2\pi),\\
\sim \delta^2,& \theta\in \cup_{0\le\ell\le j} [\phi_\ell+0.1\nu^{-1}, \phi_{\ell+1}-0.1\nu^{-1}]
\end{array}
\right.
\]
so
\begin{eqnarray*}
\left|\int_{[0,2\pi)} \cot\left(\frac{\phi-\theta}{2}\right) \cdot \left( \sum_{s=0}^{j}\log(1+|b_s(e^{i\theta})|^2)\right)d\theta\right|\ge \hspace{5cm}\\
C_1\delta^2\left|\sum_{\ell=j-\lceil\nu/20\rceil}^{j}\int_{ \phi_\ell+0.1\nu^{-1}}^{\phi_{\ell+1}-0.1\nu^{-1}} \cot\left(\frac{\phi-\theta}{2}\right) d\theta\right|-C_2\delta^2\ge C_1 \delta^2\log\nu-C_2\delta^2\,,
\end{eqnarray*}
when $\phi\in \Delta_{j+1}$ and $ j\in [\nu/10,9\nu/10]$.
\end{proof}


We now recursively produce the set $\{F^{(0)},\ldots,F^{(\nu-1)}\}$ where each $F^{(j)}, j\in \{1,\ldots,\nu-1\}$ has compact support as follows: $F^{(0)}=q^{(0)}$ and
\begin{eqnarray} 
F^{(j)}=(F^{(j-1)})^{\langle T_j\rangle}+((q^{(j)})^{\langle T_j\rangle })_{\to 3T_j}, \, j\in \{1,\ldots,\nu-1\}\,,\label{rsd}
\end{eqnarray}
where $T_j$ is a large natural numbers and $10T_j<T_{j+1}, \forall j$. \smallskip

Notice that for such choice of $\{T_j\}$, we have  $(F^{(j-1)})^{\langle T_j\rangle}=F^{(j-1)}, j\in \{2,\ldots,\nu-1\}$. For shorthand, we will write
\[
\limsup_{\vec{T}_j\to\infty}G(T_1,\ldots,T_j)=\limsup_{T_1\to \infty}\,(\ldots\limsup_{T_j\to\infty}G(T_1,\ldots,T_{j})\ldots)
\]
for any function $G(T_1,\ldots,T_j)$.
\begin{lemma} For $j\in \{1,\ldots,\nu-1\}$, we have
\begin{eqnarray}\label{fi1}
\|F^{(j)}\|^2_{\ell^2(\Z)}\le C\delta^2j/\nu\,,\\
\limsup_{\vec{T}_j\to\infty}|\frak{a}^{(*)}(z,F^{(j)})-\frak{A}_{j}(z)|=0 \,\, \text{uniformly in} \,z\in \overline{\D}\,,   \label{t3}\\
\limsup_{\vec{T}_j\to\infty}|\frak{b}(z,F^{(j)}|\le |\frak{A}_{j}(z)|(|\frak{b}(z,q^{(0)})|+\ldots+|\frak{b}(z,q^{(j)})|)\,\, \text{uniformly in} \,z\in {\T},\label{pr2}\\
\limsup_{{T}_j\to\infty}|\frak{r}(z,F^{(j)})-\frak{r}(z,F^{(j-1)})|\le {|\frak{r}(z,q^{(j)})|}/({1-|\frak{r}(z,q^{(j)})|})  \,\, \text{uniformly in} \,z\in {\T}\label{pr28}\,.
\end{eqnarray}
\end{lemma}
\begin{proof}
The bound \eqref{fi1} is immediate from \eqref{lok1} and the construction.
To prove \eqref{t3} and \eqref{pr2}, we proceed by induction. For $j=1$, they follows from \eqref{f1}, \eqref{f2}, \eqref{p1}, and \eqref{pok1}. Suppose the claims hold for $j-1$. We use \eqref{f1} to write (with fixed $T_1,\ldots,T_{j-1}$)
\begin{equation}\label{kun1}
\frak{a}^{(*)}(z,F^{(j)})=\frak{a}^{(*)}(z,F^{(j-1)})\frak{a}^{(*)}(z,q^{(j)})+z^{3T_j}\frak{b}^{(*)}(z,F^{(j-1)})\frak{b}(z,q^{(j)})+o_{T_j}(1)
\end{equation}
and
\begin{eqnarray*}
\limsup_{\vec{T}_j\to\infty}|\frak{a}^{(*)}(z,F^{(j)})-\frak{A}_{j}(z)|\le \Bigl(\limsup_{\vec{T}_j\to\infty}|\frak{a}^{(*)}(z,F^{(j-1)})-\frak{A}_{j-1}(z)|\Bigr) |\frak{a}^{(*)}(z,q^{(j)})|\\
+\Bigl(\limsup_{\vec{T}_j\to\infty}|\frak{b}^{(*)}(z,F^{(j-1)})|\Bigr)\cdot |\frak{b}(z,q^{(j)})|+\limsup_{\vec{T}_j\to\infty}|o_{T_j}(1)|\,.
\end{eqnarray*}
The last term is zero. In the first and the second terms, the functions involved are independent of $T_j$ so we can use inductive assumptions for the $j-1$ and \eqref{p1} to obtain 
 \eqref{t3} uniformly in $z\in \T$. By the maximum principle, we get our statement in $\D$, as well. 
By \eqref{f2},
\begin{equation}\label{kun2}
\frak{b}(z,F^{(j)})=z^{3T_j}\frak{b}(z,q^{(j)})\frak{a}(z,F^{(j-1)})+\frak{b}(z,F^{(j-1)})\frak{a}^{(*)}(z,q^{(j)})+o_{T_j}(1)
\end{equation}
uniformly in $z\in \T$. We argue similarly,
\begin{eqnarray*}
\limsup_{\vec{T}_j\to\infty}|\frak{b}(z,F^{(j)})|\le \Bigl(\limsup_{\vec{T}_j\to\infty}|\frak{a}(z,F^{(j-1)})|\Bigr)|\frak{b}(z,q^{(j)})|+\hspace{4cm}\\
\Bigr(\limsup_{\vec{T}_j\to\infty}|\frak{b}(z,F^{(j-1)})|\Bigl)|\frak{a}^{(*)}(z,q^{(j)})|+\limsup_{\vec{T}_j\to\infty}o_{T_j}(1)\,.
\end{eqnarray*}
The last term is zero and the first two are independent of $T_j$ so applying both inductive assumptions for $j-1$ we get our statement.
 The bound \eqref{pr28} follows from \eqref{last1}.
\end{proof}
Besides a large parameter $\nu$ and small parameter $\delta>0$, we introduce an additional small  parameter $\epsilon>0$.\smallskip

\noindent {\bf Definition.} {\it Given $(\nu,\delta,\epsilon)$, we call $F_{\nu,\delta,\epsilon}:=F^{(\nu-1)}_{\nu,\delta,\epsilon}$ from \eqref{rsd}
 an { $(\nu,\delta,\epsilon)$-daisy} if for every $j\in \{1,\ldots,\nu-1\}$ we have
\begin{eqnarray}\label{fi2q}
\|F_{\nu,\delta,\epsilon}\|^2_{\ell^2(\Z)}\le C\delta^2 \,,\\
|\frak{a}^{(*)}(z,F_{\nu,\delta,\epsilon}^{(j)})-\frak{A}_{j}(z)|\le \epsilon \,\, \quad \text{uniformly in} \,z\in \overline{\D}\,,   \label{t3q}\\
|\frak{b}(z,F_{\nu,\delta,\epsilon}^{(j)})|\lesssim (|\frak{b}(z,q^{(0)})|+\ldots+|\frak{b}(z,q^{(j)})|)+\epsilon\lesssim \delta+\epsilon\,\,\quad \text{uniformly in} \,z\in {\T}\label{pr2q}\,,\\
\supp F_{\nu,\delta,\epsilon}^{(j)}\subset [-T_1,4T_j]\,,\\ \label{cute}
F_{\nu,\delta,\epsilon}^{\le 4T_j}=F_{\nu,\delta,\epsilon}^{\langle 4T_j\rangle }=F_{\nu,\delta,\epsilon}^{(j)}\,,\label{cut1}\\
\sup_{t\in \Z}|\frak{r}(z,F_{\nu,\delta,\epsilon}^{\le t})|\lesssim\delta+\epsilon\quad \text{uniformly in} \,z\in {\T}\,.\label{cut2}
\end{eqnarray}}
\begin{lemma}
One can choose the parameters $T_1,\ldots,T_{\nu-1}$ large enough such that the $(\nu,\delta,\epsilon)$-daisy exists for every  triple 
$(\nu,\delta,\epsilon)$. 
\end{lemma}
\begin{proof}
Application of the previous lemma guarantees that \eqref{fi2q}-\eqref{cut1} can be satisfied. Now, we focus on \eqref{cut2}. First, we notice that \eqref{t3q} and \eqref{pr2q} yield
\begin{equation}
|\frak{r}(z,F_{\nu,\delta,\epsilon}^{(j)}|\lesssim \delta+\epsilon\,\,\quad \text{uniformly in} \,z\in {\T}\label{pr2q88}
\end{equation}
for each $j$.  For each $t\in \Z$, define $j(t): \min_{s\in \{1,\ldots,\nu-1\}}|t-4T_s|=|t-4T_{j(t)}|$. Then,
\[
\sum_{t\le s\le 4T_{j(t)}}|(F_{\nu,\delta,\epsilon})_s|\stackrel{\eqref{cont9}}{\lesssim} \delta \quad \left( \text{or} \sum_{4T_{j(t)}\le s\le t}
|(F_{\nu,\delta,\epsilon})_s|\lesssim\delta\right)
\]
and
\[
|\frak{r}(z,F_{\nu,\delta,\epsilon}^{\le t})|\le |\frak{r}(z,F_{\nu,\delta,\epsilon}^{\le t})-\frak{r}(z,F_{\nu,\delta,\epsilon}^{(j(t))})|+|\frak{r}(z,F_{\nu,\delta,\epsilon}^{(j(t))})|\stackrel{\eqref{pr2q88}+\text{Lemma}\, \ref{var}}{\lesssim}\delta+\epsilon
\]
and existence of the daisy is proved.
\end{proof}



Take $\epsilon=0.1$. Then, \eqref{lb1}, \eqref{t3q}, and \eqref{cute} give us 
\begin{equation}\label{lb101}
|\arg \frak{a}^{(*)}(e^{i\phi},F_{\nu,\delta,\epsilon}^{\langle 4T_j\rangle})|\ge  \delta^2(C_1\log\nu-C_2), \quad \phi\in \Delta_{j+1}, \quad j\in [\nu/10,9\nu/10]\,.
\end{equation}
Taking the sequence $\{\nu_s\}$ and $\delta$ such that 
\[
C\delta^2\le \omega^2, \quad \lim_{s\to\infty}\nu_s=+\infty
\]
and the corresponding $(\nu_s,\delta,0.1)$-daisies, the bound \eqref{lb101} yields the  Theorem \ref{gothg} for  $I=\{e^{i\phi}, \phi\in [\pi/2,3\pi/2]\}$. Clearly, our construction can accommodate any other arc. The proof of Theorem \ref{gothg} is finished.
\end{proof}
Introduce $\ell(\nu,\delta,\epsilon)$ as
\[
\ell(\nu,\delta,\epsilon)=\min \{s: s\in \Z^+, \supp F_{\nu,\delta,\epsilon}\subset [-s,s]\}\,.
\]
Since each daisy is compactly supported, such a number is always well-defined and finite. \medskip

\noindent {\it Proof of Theorem \ref{t1}.} Fix some $n^*\in \N$, a large parameter, and let
\begin{equation}\label{col2}
\nu_n=\lceil\exp(\exp(n^2))\rceil,\delta_n=\exp(-n),\epsilon_n=\exp(-n),
\end{equation} where $n\ge n^*$. Let $D^{(n)}:=F_{\nu_n,\delta_n,\epsilon_n}$, a $(\nu_n,\delta_n,\epsilon_n)$-daisy from the previous lemma. Partition $\T$ to the left semicircle $\T_\ell:=\{e^{i\phi}, \phi\in [\pi/2,3\pi/2]\}$ and the right semicircle $\T_{r}:=\{e^{i\phi}, \phi\in [-\pi/2,\pi/2]\}$. Using our construction, we can arrange (see \eqref{cut1} and \eqref{lb101} for $\T_\ell$) that 
\begin{eqnarray}\label{gr1}
\inf_{z\in \T_{\ell}} \sup_{N\in \N}|\arg \frak{a}^{(*)}(z,(D^{(n)})^{\le N})|\gtrsim \delta_n^2\log\nu_n
\end{eqnarray}
for even $n$ and 
\begin{eqnarray}
\inf_{z\in \T_{r}} \sup_{N\in \N}|\arg \frak{a}^{(*)}(z,(D^{(n)})^{\le N})|\gtrsim \delta_n^2\log\nu_n
\end{eqnarray}
for odd $n$. Recall that  $\supp D^{(n)}\subset [-\ell(\nu_n,\delta_n,\epsilon_n),\ell(\nu_n,\delta_n,\epsilon_n)]$.  We define 
\[
H^{(n^*)}:=(D^{(n^*)})_{\to L_{n^*}}\,,
\]
where $L_{n^*}>\ell(\nu_{n^*},\delta_{n^*},\epsilon_{n^*})$. Then, we recursively define 
\begin{equation}\label{sm1}
H^{(n)}=H^{(n-1)}+(D^{(n)})_{\to 3L_{n}}, \, n>n^*\,,
\end{equation}
where $L_n$ satisfies $L_n>2L_{n-1}$ and $L_n>\ell(\nu_n,\delta_n,\epsilon_n)$, which ensures
\begin{equation}\label{list}
\supp H^{(n-1)}\subset [1,2L_n), \, \supp D^{(n)}\subset  (-L_n,L_n), \, \supp H^{(n-1)}\cap \supp (D^{(n)})_{\to 3L_{n}}=\emptyset\,.
\end{equation}
Let 
\begin{equation}\label{eyh}
H=\lim_{n\to\infty} H^{(n)}.
\end{equation} Notice that our choice of $\{L_n\}$ guarantees that:\smallskip
\begin{eqnarray}
\text{  $\supp H\subset \N$,}\\
\text{ by \eqref{list}, the terms in \eqref{sm1} have disjoint supports  so $H$ is well defined,}\\
H_n=0\,\, \text{for} \,\,n\in \cup_{s> n^*} [4L_s,2L_{s+1}],\\
\|H\|^2_{\ell^2(\Z)}\le \sum_{n\ge n^*}\|D^{(n)}\|^2_{\ell^2(\Z)}\stackrel{\eqref{fi1}}{\lesssim} \sum_{n\ge n^*}\delta_n^2\stackrel{\eqref{col2}}{\lesssim} e^{-2n^*}\,.
\end{eqnarray}
Next, we will study $\frak{a}(z,H^{(n)})$ and $\frak{b}(z,H^{(n)})$.
\begin{lemma}
For every $n\ge n^*$, we have
\begin{equation}\label{glob1}
 |\frak{a}^{(*)}(z,H^{(n)})|\le 1+C\exp(-n^*), \, |\frak{b}(z,H^{(n)})|\le C\exp(-n^*)
\end{equation}
and
\begin{equation}\label{cauch1}
|\frak{r}(z,H^{(n+1)})-\frak{r}(z,H^{(n)})|\le  {|\frak{r}(z,D^{(n+1)})|}/{(1-|\frak{r}(z,D^{(n+1)})|)}\lesssim \exp(-n)
\end{equation}
uniformly in $z\in \T$ and $n\ge n^*$. 
\end{lemma}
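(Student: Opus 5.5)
We prove \eqref{cauch1} first, since it is the simplest part. By construction \eqref{sm1} and \eqref{list}, $H^{(n+1)}=H^{(n)}+(D^{(n+1)})_{\to 3L_{n+1}}$. Here $\supp H^{(n)}\subset[1,2L_{n+1})$, $\supp D^{(n+1)}\subset(-L_{n+1},L_{n+1})$, and the supports are disjoint. So the first lemma of this section, \eqref{mul33}, applies with $F=H^{(n)}$, $G=D^{(n+1)}$ (taking $M$ large enough that $G^{\le M}=G$) and shift $N=3L_{n+1}>2L_{n+1}+L_{n+1}$. It gives
\[
|\frak{r}(z,H^{(n+1)})-\frak{r}(z,H^{(n)})|\le |\frak{r}(z,D^{(n+1)})|/(1-|\frak{r}(z,D^{(n+1)})|).
\]
To bound the right-hand side we use \eqref{cut2} for the daisy $D^{(n+1)}$ with $t$ large: $|\frak{r}(z,D^{(n+1)})|\lesssim \delta_{n+1}+\epsilon_{n+1}\lesssim e^{-n}$. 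Since $n\ge n^*$ is large, the denominator is at least $1/2$, and \eqref{cauch1} follows.

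For \eqref{glob1}, we bound $\frak{r}$ first and then recover $\frak{a}$ and $\frak{b}$. For $n=n^*$, the shift invariance \eqref{obbs} gives $\frak{r}(z,H^{(n^*)})=z^{L_{n^*}}\frak{r}(z,D^{(n^*)})$, so $|\frak{r}(z,H^{(n^*)})|\lesssim e^{-n^*}$. Telescoping \eqref{cauch1} then gives
\[
|\frak{r}(z,H^{(n)})|\le C e^{-n^*}+C\sum_{k\ge n^*}e^{-k}\lesssim e^{-n^*}
\]
uniformly in $z\in\T$ and $n\ge n^*$. On $\T$, property (A) $|\frak{a}^{(*)}|^2=1+|\frak{b}|^2$ together with $\frak{r}=\frak{b}/\frak{a}^{(*)}$ gives $|\frak{a}^{(*)}|^2=(1-|\frak{r}|^2)^{-1}$. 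Hence on $\T$
\[
|\frak{a}^{(*)}(z,H^{(n)})|\le 1+Ce^{-2n^*},\qquad |\frak{b}(z,H^{(n)})|=|\frak{r}|\,|\frak{a}^{(*)}|\le Ce^{-n^*}.
\]
Each $H^{(n)}$ is compactly supported, so $\frak{a}^{(*)}(\cdot,H^{(n)})$ is analytic in $\D$ and continuous up to the boundary. The maximum principle therefore extends the bound on $\frak{a}^{(*)}$ to $\overline{\D}$, should \eqref{glob1} be read there.

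A second route to \eqref{glob1}, with no need to control $\frak{r}$, uses the nonlinear Plancherel identity \eqref{sumrule} and the outer representation \eqref{out1}. By \eqref{sumrule}, $\int_\T\log|\frak{a}^{(*)}(\xi,H^{(n)})|\,dm\lesssim\|H\|_{\ell^2}^2\lesssim e^{-2n^*}$. This is only an average bound, though, and does not give a uniform one. That is why the $\frak{r}$-telescoping argument is the main one.

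The main obstacle is justifying the uniform smallness of $\frak{r}(z,D^{(n+1)})$, which rests on \eqref{cut2} in the daisy definition. One subtlety: \eqref{mul33} is stated for truncations $G^{\le M}$, and we need it for the full daisy. Taking $M\ge\ell(\nu_{n+1},\delta_{n+1},\epsilon_{n+1})$ makes $G^{\le M}=G$, so this is only a matter of bookkeeping. The constants in \eqref{cut2} are absolute, independent of $\nu$, which is exactly what makes the summation uniform in $n$.
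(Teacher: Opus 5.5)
Your argument is correct. For \eqref{cauch1} it is the paper's own proof: \eqref{mul33} applied to $H^{(n)}$ and $D^{(n+1)}$, followed by \eqref{cut2}. There is one small slip: $3L_{n+1}>2L_{n+1}+L_{n+1}$ is literally an equality. The open endpoints in \eqref{list} give $N_1\le 2L_{n+1}-1$ and $N_2\le L_{n+1}-1$, so $N>N_1+N_2$ does hold.

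For \eqref{glob1} you take a genuinely different route. The paper works with the transfer-matrix entries directly:
\begin{itemize}
\item from \eqref{mul13}--\eqref{mul23} it derives the recursive inequalities \eqref{per1}--\eqref{per2};
\item it inserts the daisy bounds $|\frak{a}^{(*)}(z,D^{(n)})|\le 1+Ce^{-n}$ and $|\frak{b}(z,D^{(n)})|\lesssim e^{-n}$, which come from \eqref{b1}, \eqref{t3q} and \eqref{pr2q};
\item it runs the discrete Gronwall Lemma~\ref{lemap1} twice, first on $|\frak{a}^{(*)}|+|\frak{b}|$ and then on each entry.
\end{itemize}
It proves \eqref{cauch1} separately at the end. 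You instead deduce \eqref{glob1} from \eqref{cauch1}. Telescoping from the base case $|\frak{r}(z,H^{(n^*)})|=|\frak{r}(z,D^{(n^*)})|$ (via \eqref{obbs}) gives $|\frak{r}(z,H^{(n)})|\lesssim e^{-n^*}$. The identity $|\frak{a}^{(*)}|^2=(1-|\frak{r}|^2)^{-1}$ on $\T$ then turns this into the bounds on $\frak{a}^{(*)}$ and $\frak{b}$.

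Your route has several advantages:
\begin{itemize}
\item it dispenses with Lemma~\ref{lemap1};
\item it uses only the reflection-coefficient information \eqref{cut2} about the daisies;
\item it yields the sharper bound $|\frak{a}^{(*)}(z,H^{(n)})|\le 1+Ce^{-2n^*}$;
\item it gives at once the uniform smallness of $\frak{r}(z,H^{(n)})$ that the paper later records in \eqref{stek}.
\end{itemize}
The paper's Gronwall route, in turn, proves \eqref{glob1} without dividing by $1-|\frak{r}|$, so it needs no smallness of the reflection coefficients at that step. Yours requires $n^*$ large enough to keep all reflection coefficients involved below, say, $1/2$. That is harmless here, since $n^*$ is a large parameter throughout.
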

\begin{proof} From \eqref{mul13} and \eqref{mul23}, we get
\begin{eqnarray}\label{per1}
|\frak{a}^{(*)}(z,H^{(n+1)})|\le
|\frak{a}^{(*)}(z,H^{(n)})\frak{a}(z,D^{(n+1)})|+|\frak{b}(z,H^{(n)})\frak{b}(z,D^{(n+1)})|\,,\\\label{per2}
|\frak{b}(z,H^{(n+1)})|\le
 |\frak{a}^{(*)}(z,H^{(n)})\frak{b}(z,D^{(n+1)})|+|\frak{b}(z,H^{(n)})\frak{a}^{(*)}(z,D^{(n+1)})|\,,
\end{eqnarray}
uniformly in $z\in \T$. 
Next, from \eqref{b1}, \eqref{t3q},  and \eqref{pr2q}, we get
\[
|\frak{a}^{(*)}(z,D^{(n)})|\le 1+C\exp(-n),
|\frak{b}(z,D^{(n)})|\lesssim \exp(-n)\,.
\]
Adding \eqref{per1} and \eqref{per2}, we get
\[
m_{n+1}\le m_n(1+C\exp(-n))\,,
\]
where $m_n:=|\frak{a}^{(*)}(z,H^{(n)})|+|\frak{b}(z,H^{(n)})|$. Since $m_{n^*}\le 1+C\exp(-n^*)$, we use Lemma \ref{lemap1} from Appendix to get
\[
m_{n}\le 1+C\exp(-n^*), \quad \forall n\ge n^*\,.
\]
Substituting this estimate back into \eqref{per1} and \eqref{per2}, we have
\begin{eqnarray}\label{per11}
|\frak{a}^{(*)}(z,H^{(n+1)})|\le
 |\frak{a}^{(*)}(z,H^{(n)})(1+C\exp(-n))+C\exp(-n)\,,\\\label{per21}
|\frak{b}(z,H^{(n+1)})|\le
 |\frak{b}(z,H^{(n)})(1+C\exp(-n))+C\exp(-n)\label{per12}
\end{eqnarray}
uniformly in $z\in \T$. Applying Lemma \ref{lemap1} one more time, we get
\begin{equation}
 |\frak{a}^{(*)}(z,H^{(n)})|\le 1+C\exp(-n^*), \, |\frak{b}(z,H^{(n)})|\le C\exp(-n^*)
\end{equation}
uniformly in $z\in \T$ and $n\ge n^*$. The bound \eqref{cauch1} follows from \eqref{mul33} and \eqref{cut2}.
\end{proof}

 We claim that 
\begin{lemma}\label{gr9}For $H$, given by \eqref{eyh} with large enough $n^*$, we get
\[
\sup_{T>0} |\arg \frak{a}^{(*)}(z,H^{\langle T\rangle})|=+\infty
\]
for all $z\in \T$.
\end{lemma}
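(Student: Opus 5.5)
Fix $z\in\T$. For infinitely many $n$ (all even $n$ if $z\in\T_\ell$, all odd $n$ if $z\in\T_r$), \eqref{gr1} or its odd analogue gives some $N_n$ with
\[
|\arg\frak{a}^{(*)}(z,(D^{(n)})^{\le N_n})|\gtrsim\delta_n^2\log\nu_n\sim e^{-2n}e^{n^2}\to\infty .
\]
The plan is to show that truncating $H$ at the corresponding point of the shifted daisy reproduces this argument up to a bounded error. Take $T=3L_n+N_n$. The supports of the pieces in \eqref{sm1} are disjoint and ordered, so $H^{\langle T\rangle}=H^{(n-1)}+((D^{(n)})^{\le N_n})_{\to 3L_n}$. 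Applying \eqref{mul13} with $F=H^{(n-1)}$ and $G=D^{(n)}$, $M=N_n$, $N=3L_n$ (the support conditions come from \eqref{list}) gives
\[
\frak{a}^{(*)}(z,H^{\langle T\rangle})=\frak{a}^{(*)}(z,H^{(n-1)})\,\frak{a}^{(*)}(z,(D^{(n)})^{\le N_n})\Bigl(1+z^{3L_n}\frac{\frak{b}^{(*)}(z,H^{(n-1)})}{\frak{a}^{(*)}(z,H^{(n-1)})}\,\frak{r}(z,(D^{(n)})^{\le N_n})\Bigr).
\]
Here $\frak{r}(z,G)=\frak b(z,G)/\frak a^{(*)}(z,G)$, so it is the analogue of $\frak r$ with $\frak{a}^{(*)}$ in the denominator.

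Next I take arguments, which adds them up to multiples of $2\pi$. To make this additive identity rigorous for the continuous branches defined via \eqref{out1}, I would note that all three factors are boundary values of functions in $A(\D)$. The first two factors are outer, and the third is $1+(\text{small})$, with modulus of the small term $\lesssim e^{-n^*}\cdot(\delta_n+\epsilon_n)<1/2$ on $\overline\D$ by \eqref{glob1} and \eqref{cut2}. For the third factor one still has to check that the bound persists inside the disc. It is cleanest to argue on $\T$ and observe that the third factor equals $\frak{a}^{(*)}(z,H^{\langle T\rangle})$ divided by the outer product. It is therefore outer with values in the disc $\{|w-1|<1/2\}$, and its argument is bounded by $\pi/6$. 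The first factor has $|\arg\frak{a}^{(*)}(z,H^{(n-1)})|\le C$ uniformly in $n$. This follows because $\log|\frak a^{(*)}(\cdot,H^{(n-1)})|$ is smooth and small in modulus by \eqref{glob1}. To avoid the unboundedness of the Hilbert transform on $L^\infty$, I instead use the fact that $\frak{a}^{(*)}(\cdot,H^{(n-1)})$ is a continuous outer function with values in the annulus $1\le|w|\le 1+Ce^{-n^*}$. Its continuous argument, normalized to $0$ at the origin, can still be large, so this is the delicate point. I would resolve it by applying the same factorization to the previous pieces with $T$ equal to the full length of $H^{(n-1)}$. Then $\frak a^{(*)}(z,H^{(n-1)})$ is the product of $\frak a^{(*)}(z,D^{(k)})$ for $k<n$ and correction factors close to $1$. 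Each $\frak a^{(*)}(z,D^{(k)})$ is close to $\frak A_{\nu_k-1}$ by \eqref{t3q}, which has argument $O(\delta_k^2\log\nu_k)$, and this is not summable. So this route fails as stated.

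The fix I would use is to bound the first factor by its own value rather than uniformly. Set $\Theta_{n-1}(z):=\arg\frak{a}^{(*)}(z,H^{(n-1)})$, a fixed finite number for each $n$. The identity gives $|\arg\frak{a}^{(*)}(z,H^{\langle T\rangle})|\ge\delta_n^2\log\nu_n/C-|\Theta_{n-1}(z)|-1$. In addition, taking $T$ equal to the full length gives $\Theta_n=\Theta_{n-1}+\arg\frak a^{(*)}(z,D^{(n)})+O(1)$. Suppose $\sup_T|\arg\frak{a}^{(*)}(z,H^{\langle T\rangle})|=K<\infty$. Then $|\Theta_{n-1}|\le K$ for all $n$, since $\Theta_{n-1}$ is itself attained at some truncation $T$. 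The displayed lower bound for the good parity of $n$ then tends to infinity, which is a contradiction. The main obstacle is the justification that the arguments add, with the third factor contributing $O(1)$. I would handle it through the outer-function argument above, making $n^*$ large enough that the perturbation term stays below $1/2$ on the whole closed disc by the maximum principle applied to the $A(\D)$ functions $\frak b^{(*)}/\frak a^{(*)}$ and $\frak r$.
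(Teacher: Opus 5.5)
Your argument is correct and is essentially the paper's proof. You split $H^{\langle 3L_n+N_n\rangle}=H^{(n-1)}+((D^{(n)})^{\le N_n})_{\to 3L_n}$ via \eqref{mul13}, show the cross term changes the argument only by $O(1)$ using \eqref{glob1}, \eqref{cut2} and the maximum principle, and control $\arg\frak a^{(*)}(z,H^{(n-1)})$ by the contradiction hypothesis, since $H^{(n-1)}=H^{\langle 2L_n\rangle}$ is itself a truncation of $H$. The abandoned uniform bound and the recursion for $\Theta_n$ can simply be deleted. Also, $\frak b^{(*)}/\frak a^{(*)}$ and $\frak r$ are not by themselves analytic in $\D$ (they need the factors $z^{2L_n}$ and $z^{L_n}$), but your quotient argument for the third factor already sidesteps this.
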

\begin{proof}The proof is by contradiction. Suppose $z^*\in \T_{\ell}$ is a point (the case $z^*\in \T_{r}$ is  handled similarly) where $|\arg \frak{a}^{(*)}(z^*,H^{\langle T\rangle})|<C^*$ for all $T\in \N$. By \eqref{gr1}, we can take a sequence $\{n_j\}$ and the numbers $\{N_j\}$ such that 
\begin{equation}\label{up1}
|\arg \frak{a}^{(*)}(z^*,(D^{(n_j)})^{\le N_j})|\gtrsim \delta_{n_j}^2\log\nu_{n_j}\stackrel{\eqref{col2}}{\gtrsim }\exp(n_j^2-2n_j)\to +\infty, \quad j\to\infty\,.
\end{equation}
Notice that $H^{(n_j-1)}+((D^{(n_j)})^{\le N_j})_{\to 3L_{n_j}}=H^{\langle 3L_{n_j}+N_j\rangle}=H^{\le (3L_{n_j}+N_j)}$.
From \eqref{mul13}, one has
\begin{eqnarray}
\frak{a}^{(*)}(z,H^{(n_j-1)}+((D^{(n_j)})^{\le N_j})_{\to 3L_{n_j}})=\\
\frak{a}^{(*)}(z,H^{(n_j-1)})\frak{a}^{(*)}(z,(D^{(n_j)})^{\le N_j})+z^{3L_{n_j}}\frak{b}^{(*)}(z,H^{(n_j-1)})\frak{b}(z,(D^{(n_j)})^{\le N_j})\nonumber
\end{eqnarray}
and
\begin{eqnarray}
|\frak{a}^{(*)}(z,H^{\langle 3L_{n_j}+N_j\rangle})-\frak{a}^{(*)}(z,H^{(n_j-1)})\frak{a}^{(*)}(z,(D^{(n_j)})^{\le N_j})\le |\frak{b}^{(*)}(z,H^{(n_j-1)})\frak{b}(z,(D^{(n_j)})^{\le N_j})\nonumber\\
\stackrel{\eqref{pr2q}+\eqref{cut2}+\eqref{glob1}}{\lesssim }\exp(-(n^*+n_j))
\end{eqnarray}
uniformly in $z\in \T$. We can extend this bound to $z\in \D$ by applying the maximum principle.  Taking $n^*$ sufficiently large, we can guarantee that the r.h.s. is smaller than $0.1$ which yields
\[
\arg \frak{a}^{(*)}(z,H^{\langle 3L_{n_j}+N_j\rangle})=\arg \frak{a}^{(*)}(z,H^{(n_j-1)})+\arg \frak{a}^{(*)}(z,(D^{(n_j)})^{\le N_j})+O(\exp(-(n^*+n_j))
\]
for $z\in \T$. The property \eqref{list} shows that $H^{( n_j-1)}=H^{\le 2L_{n_j}}=H^{\langle 2L_{n_j}\rangle}$. Then, for $n^*$ large enough
\begin{eqnarray*}
|\arg \frak{a}^{(*)}(z^*,(D^{(n_j)})^{\le N_j})|\le \hspace{9cm}\\
|\arg \frak{a}^{(*)}(z^*,H^{\langle 3L_{n_j}+N_j\rangle})|+|\arg \frak{a}^{(*)}(z,H^{\langle 2L_{n_j}\rangle})|+O(\exp(-(n^*+n_j))<3C^*
\end{eqnarray*}
and that contradicts \eqref{up1}.
\end{proof}
\begin{lemma} \label{sss}The sequence $\{\frak{r}(z,H^{\le n})\}$ converges uniformly on $\T$. 
\end{lemma}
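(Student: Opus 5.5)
We will show that $\{\frak{r}(z,H^{\le t})\}_{t\in\Z}$ is uniformly Cauchy on $\T$ as $t\to\infty$. The plan is to split the index $t$ into two regimes: the ``gaps'' $t\in[4L_s,2L_{s+1}]$, where $H$ vanishes, and the ``daisy windows'' $t\in(3L_s-L_s,3L_s+L_s)$, where part of $D^{(s)}$ is being added. First, on the gaps, $H^{\le t}=H^{(s)}$ by \eqref{list} and \eqref{sm1}. So the values at gap indices form the sequence $\{\frak{r}(z,H^{(n)})\}$, which by \eqref{cauch1} satisfies
\[
|\frak{r}(z,H^{(n+1)})-\frak{r}(z,H^{(n)})|\lesssim e^{-n}
\]
uniformly in $z$. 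Since $\sum_n e^{-n}<\infty$, this sequence converges uniformly to some limit $\frak{r}_\infty(z)$, and in fact $\sup_{z\in\T}|\frak{r}(z,H^{(n)})-\frak{r}_\infty(z)|\lesssim e^{-n}$.

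Second, we control the oscillation inside a window. For $t\in[2L_s,4L_s]$ we write
\[
H^{\le t}=H^{(s-1)}+\bigl((D^{(s)})^{\le t-3L_s}\bigr)_{\to 3L_s}.
\]
Here $H^{(s-1)}$ is supported in $(-\infty,2L_s)$, and the shifted piece $G=(D^{(s)})^{\le M}$, with $M=t-3L_s$, is supported in $[-L_s,\infty)$. So the separation hypothesis of the first lemma of this section holds with $N=3L_s>2L_s+L_s$. Applying \eqref{mul33} gives
\[
|\frak{r}(z,H^{\le t})-\frak{r}(z,H^{(s-1)})|\le \frac{|\frak{r}(z,(D^{(s)})^{\le M})|}{1-|\frak{r}(z,(D^{(s)})^{\le M})|}.
\]
By property \eqref{cut2} of the daisy, $\sup_M|\frak{r}(z,(D^{(s)})^{\le M})|\lesssim\delta_s+\epsilon_s\lesssim e^{-s}$ uniformly on $\T$. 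Hence, for $s\ge n^*$ large, the whole window stays within $Ce^{-s}$ of $\frak{r}(z,H^{(s-1)})$.

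Combining the two regimes, every $t\ge 2L_s$ satisfies
\[
\sup_{z\in\T}|\frak{r}(z,H^{\le t})-\frak{r}_\infty(z)|\lesssim e^{-s}.
\]
Letting $s\to\infty$ gives uniform convergence. For $t\le 0$ the statement is trivial, since $\supp H\subset\N$.

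The main obstacle is the window estimate. It depends entirely on \eqref{cut2}, i.e.\ on the truncations $(D^{(s)})^{\le M}$ having uniformly small reflection coefficient at every cut $M$, not only at the cuts $4T_j$. That is exactly what the daisy lemma supplies, via the $\ell^1$ smallness \eqref{cont9} of each piece $q^{(j)}$. The remaining point to check is that \eqref{mul33} needs only the $\frak{r}$ of the truncated daisy, and no control on $\frak{a}$, which is exactly why the reflection coefficient converges even though $\arg\frak{a}$ diverges by Lemma~\ref{gr9}.
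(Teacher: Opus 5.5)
Your proof is correct and is essentially the paper's argument: you get uniform convergence of $\frak{r}(z,H^{(s)})$ from \eqref{cauch1}, and you get the window estimate $|\frak{r}(z,H^{\le t})-\frak{r}(z,H^{(s-1)})|\lesssim e^{-s}$ from \eqref{mul33} and \eqref{cut2}, with your explicit gap/window split playing the role of the paper's nearest-$3L_p$ index $m_n$. One cosmetic slip: ``$3L_s>2L_s+L_s$'' is false as written, but the separation hypothesis still holds with $N_1=2L_s-1$ and $N_2=L_s-1$, because \eqref{list} gives $\supp H^{(s-1)}\subset[1,2L_s)$ and $\supp D^{(s)}\subset(-L_s,L_s)$.
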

\begin{proof}
Indeed, by \eqref{cauch1} and Cauchy criterion, the sequence $\{\frak{r}(z,H^{(s)})\}$ converges uniformly on $\T$. For $n$,  define $m_n$ as any minimizer in $\min_{p\ge n^*}|n-3L_p|=|n-3L_{m_n}|$. If $s_n$ is defined (check \eqref{sm1}) by $H^{\le n}=H^{(m_n-1)}+((D^{(m_n)})^{\le s_n})_{\to 3L_{m_n}}$, then \eqref{mul33} and \eqref{cut2} yield
\[
|\frak{r}(z,H^{\le n})-\frak{r}(z,H^{(m_n-1)})|\lesssim \exp(-m_n)\to 0, \quad n\to\infty
\]
uniformly in $z\in \T$. Hence, $\frak{r}(z,H^{\le n})$ converges uniformly on $\T$. 
\end{proof}
We can now finish the proof of Theorem \ref{t1}. First, we claim that $\{\frak{a}(z,H^{\langle n\rangle})\}$ diverges at every point $z\in \T$.
We argue by contradiction. Suppose $\lim_{n\to\infty}\frak{a}(z^*,H^{\langle n\rangle})=\alpha^*$ at some point $z^*\in \T$. Clearly, $|\alpha^*|\ge 1$. Then, for every $\epsilon>0$ there is $n_\epsilon$ such that 
$
|\frak{a}^{(*)}(z^*,H^{\langle n\rangle})- \overline{\alpha^{*}}|\le \epsilon $
for all $n\ge n_\epsilon$. Take $\epsilon=0.1$. By Lemma \ref{argu} in Appendix, we also have
$
|\arg \frak{a}^{(*)}(z^*,H^{\langle n_2\rangle})- \arg \frak{a}^{(*)}(z^*,H^{\langle n_1\rangle})|\lesssim \epsilon
$
for all $n_1,n_2\ge n_\epsilon$. That contradicts Lemma \ref{gr9} and so $\{\frak{a}(z,H^{\langle n\rangle})\}$ diverges at every $z\in \T$. 

Now let $F=\mu \chi_{n=0}+H$, where $\mu\in (0,1)$ is a parameter. Notice that these two terms have disjoint supports since $\supp H\subset \N$. Then, \eqref{rec5} yields
\begin{eqnarray*}
\frak{a}^{(*)}(z,F^{\langle n\rangle})=\frac{\mu}{(1-\mu^2)^{\tfrac 12}}\frak{b}(z,H^{\langle n\rangle})+\frac{1}{(1-\mu^2)^{\tfrac 12}}\frak{a}^{(*)}(z,H^{\langle n\rangle})=\\\frak{a}^{(*)}(z,H^{\langle n\rangle})\left(\frac{1}{(1-\mu^2)^{\tfrac 12}}+\frak{r}(z,H^{\langle n\rangle})\frac{\mu}{(1-\mu^2)^{\tfrac 12}}\right)\,.
\end{eqnarray*}
By the previous lemma, the second factor converges uniformly on $\T$ and the limiting function has no roots on $\T$ as long as $n^*$ is chosen large enough for given $\mu$ because $|\frak{r}(z,H^{\langle n\rangle})|\lesssim \exp(-n^*)$. So $\{\frak{a}^{(*)}(z,F^{\langle n\rangle})\}$ diverges on $\T$. Similarly,
\begin{eqnarray*}
\frak{b}(z,F^{\langle n\rangle})=\frac{1}{(1-\mu^2)^{\tfrac 12}}\frak{b}(z,H^{\langle n\rangle})+\frac{\mu}{(1-\mu^2)^{\tfrac 12}}\frak{a}^{(*)}(z,H^{\langle n\rangle})=\\\frak{a}^{(*)}(z,H^{\langle n\rangle})\left(\frac{\mu}{(1-\mu^2)^{\tfrac 12}}+\frak{r}(z,H^{\langle n\rangle})\frac{1}{(1-\mu^2)^{\tfrac 12}}\right)
\end{eqnarray*}
and the last expression has no limit at every point of $\T$ for the same reason. The Theorem \ref{t1} is proved.
\qed
\medskip

\noindent {\bf Remark.} We emphasize again  that the sequence $H$ we constructed satisfies the following properties:
\begin{eqnarray}\label{sc1}
\supp H\subset \N,\\\label{sc2}
\|H\|_{\ell^2(\Z)}\lesssim \exp(-n^*), \,\, \text{where}\,\, n^*\,\,\text{is arbitrary large},\\
\frak{r}(z,H^{\le n})\to \frak{r}(z,H),\,\label{sc3}
\end{eqnarray}
where the last convergence is uniform over $\T$ and the limiting function we denoted by $\frak{r}(z,H)$ is continuous and satisfies
\begin{equation}\label{stek}
|\frak{r}(z,H)|\lesssim \exp(-n^*)
\end{equation}
for all $z\in \T$.
\medskip

\noindent {\it Proof of Theorem \ref{t4}.}
Our proof for $\Z$  carries over to the case of NLFT on $\R$ word-for-word (the reader can find more details in \cite{arx}). We will only make a few comments. The Theorem \ref{t11} can be replaced by an inverse scattering result where the bijection is established between $q\in \Sch(\R)$ and $\frak{b}(k,q)\in \Sch(\R)$. On $\T$, to create the infinite growth of the argument, we used two proper subarcs: $\T_{\ell}$ and $\T_r$. On $\R$, after we build an analog of $(\nu,\delta,\epsilon)$-daisy for which the maximal function of the argument is large on the interval $I$ of size one, we define the required $q$ by using the recursion in which the employed intervals $\{I_j\}|_{j=1}^\infty$ satisfy two properties:\medskip

$\bullet$ $|I_j|=1, \forall j\in \N$,

$\bullet$ for every $k\in \R$, we have inclusion $k\in I_j$ for infinitely many $j\in \N$. 
\qed\bigskip

\noindent {\bf Remark.} The construction we used in the proof  of Theorem \ref{t1} is flexible enough to produce $F$ that satisfies $F_n\in \R, \forall n\in \Z$, and  Theorem \ref{t2} can be adapted to the case of polynomials orthogonal on the real line. Similarly, one can modify the proof of Theorem \ref{t4} to show that the generalized eigenfunctions $\psi(x,k)$ of Schr\"odinger operator $-\psi''+V(x)\psi=k^2\psi$ with oscillating and square summable potential $V$ do not have to have the standard Jost asymptotics when $x\to\infty$ for a.e. $k$.\bigskip

\section{Connection to OPUC and proofs of Theorem \ref{t2} and \ref{t39}.} 

The polynomials $\{\phi_n(z,\sigma)\}$ satisfy recurrence
(see \cite{bs}, formulas (1.5.23) and (1.5.24))
\[
\left(
\begin{array}{c}
\phi_{n}(z,\sigma)\\
\phi_n^*(z,\sigma)
\end{array}
\right)= \frac{1}{(1-|\gamma_{n-1}|^2)^{\frac 12}}
\left(\begin{array}{cc}
z& -\overline{\gamma_{n-1}}\\
-\gamma_{n-1} z &1
\end{array}\right)\left(
\begin{array}{c}
\phi_{n-1}(z,\sigma)\\
\phi_{n-1}^*(z,\sigma)
\end{array}
\right), \quad \left(
\begin{array}{c}
\phi_{0}(z,\sigma)\\
\phi_0^*(z,\sigma)
\end{array}
\right)=\left(
\begin{array}{c}
1\\
1
\end{array}
\right)\,,
\]
where $n\in \N$ and the recurrence coefficients $\{\gamma_j\}|_{j=0}^\infty$ satisfy $\gamma_j\in \D$. It is known that there is a bijection between probability measures $\sigma$ (with infinite support on $\T$) and such $\{\gamma_j\}\in \D^\infty$. Denote the measure, given by $\{-\gamma_n\}$, by $\widetilde\sigma$. Then, 
\[
\left(
\begin{array}{c}
\phi_{n}(z,\widetilde\sigma)\\
\phi_n^*(z,\widetilde\sigma)
\end{array}
\right)= \frac{1}{(1-|\gamma_{n-1}|^2)^{\frac 12}}
\left(\begin{array}{cc}
z& \overline{\gamma_{n-1}}\\
\gamma_{n-1} z &1
\end{array}\right)\left(
\begin{array}{c}
\phi_{n-1}(z,\widetilde\sigma)\\
\phi_{n-1}^*(z,\widetilde\sigma)
\end{array}
\right), \quad \left(
\begin{array}{c}
\phi_{0}(z,\widetilde\sigma)\\
\phi_0^*(z,\widetilde\sigma)
\end{array}
\right)=\left(
\begin{array}{c}
1\\
1
\end{array}
\right)\,.
\]
Consider the NLFT on $\Z$ with 
\begin{equation}
\label{sirg}
F: F_n=0, n\le 0;\quad F_n=-\overline{\gamma_{n-1}}, n\ge 1.
\end{equation} From \eqref{nach}, we get
\begin{eqnarray*}
\left(\begin{matrix}
\phi_n(z,\sigma)&\phi_n(z,\widetilde \sigma)\\
\phi_n^*(z,\sigma)&-\phi^*_n(z,\widetilde\sigma)
\end{matrix}\right)=
X_n(z,F)\left(\begin{matrix}
1&1\\
1&-1
\end{matrix}\right)=\hspace{4cm}
\\\left(\begin{matrix}
z^n(\frak{a}(z,F^{\le n})+\frak{b}^{(*)}(z,F^{\le n}))&z^n(\frak{a}(z,F^{\le n})-\frak{b}^{(*)}(z,F^{\le n}))\\
\frak{b}(z,F^{\le n})+\frak{a}^{(*)}(z,F^{\le n})&\frak{b}(z,F^{\le n})-\frak{a}^{(*)}(z,F^{\le n})
\end{matrix}\right)\,.
\end{eqnarray*}
Therefore, 
\begin{equation}\label{conn}
\phi_n^*(z,\sigma)=\frak{b}(z,F^{\le n})+\frak{a}^{(*)}(z,F^{\le n}), \quad \phi^*_n(z,\widetilde\sigma)=-\frak{b}(z,F^{\le n})+\frak{a}^{(*)}(z,F^{\le n})\,.
\end{equation}
We will prove a result stronger than Theorem \ref{t2}. It will imply Theorem \ref{t39}, as well.
\begin{theorem}\label{tbig} For every $\epsilon>0$, there is a function $w\in C(\T)$ such that $\|w\|_{1,m}=1$ and $\|w-1\|_{C(\T)}\le \epsilon$ so that the sequence
$
\{\phi_n^*(z,\sigma)\}
$ diverges at every point $z\in \T$. Here, $d\sigma=wdm$.

Moreover, there is a sequence $\{\alpha_n\}\in \ell^2(\Z^+)$ such that the othogonal series 
\[
\sum_{n\ge 0}\alpha_n\phi_n(z,\sigma)
\]
diverges at every $z\in \T$.
\end{theorem}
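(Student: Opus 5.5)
Take the sequence $H$ from the proof of Theorem \ref{t1}, with $n^*$ large, and use it (after adjustment) as the Verblunsky coefficients via \eqref{sirg}, i.e. $\gamma_{n-1}=-\overline{H_n}$ for $n\ge 1$. Since $\supp H\subset\N$ and $\|H\|_{\ell^2}\lesssim e^{-n^*}$, the sequence $\{\gamma_n\}$ is in $\ell^2$. Hence $\sigma\in\szc$ by Szeg\H{o}'s theorem, which is the OPUC form of the sum rule \eqref{sumrule}.

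\textbf{Step 1 (the measure is close to Lebesgue).} For measures in $\szc$ one has the standard relation $|\phi_n^*|^{-2}dm\to d\sigma$ weakly. Using \eqref{conn} we write
\[
\phi_n^*(z,\sigma)=\frak{a}^{(*)}(z,H^{\le n})\bigl(1+\frak{r}(z,H^{\le n})\bigr).
\]
Then $|\phi_n^*|^2=|\frak{a}^{(*)}|^2|1+\frak{r}|^2$, and by property (A) we have $|\frak{a}^{(*)}|^{-2}=1-|\frak{r}|^2$. This gives
\[
|\phi_n^*|^{-2}=\frac{1-|\frak{r}(z,H^{\le n})|^2}{|1+\frak{r}(z,H^{\le n})|^2}.
\]
By Lemma \ref{sss}, $\frak{r}(z,H^{\le n})\to\frak{r}(z,H)$ uniformly on $\T$. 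The limit $\frak{r}(z,H)$ is continuous and satisfies $|\frak{r}(z,H)|\lesssim e^{-n^*}$ by \eqref{stek}. So $|\phi_n^*|^{-2}$ converges uniformly to the continuous positive function
\[
w=\frac{1-|\frak{r}(\cdot,H)|^2}{|1+\frak{r}(\cdot,H)|^2}.
\]
The Bernstein--Szeg\H{o} approximations $|\phi_n^*|^{-2}dm$ converge weakly to $\sigma$, so $d\sigma=w\,dm$. Since $\sigma$ is a probability measure, $\|w\|_{1,m}=1$. Also $\|w-1\|_{C(\T)}\lesssim e^{-n^*}\le\epsilon$ once $n^*$ is large.

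\textbf{Step 2 (divergence of $\phi_n^*$).} The factor $1+\frak{r}(z,H^{\le n})$ converges uniformly to a function bounded away from zero. Hence $\phi_n^*$ converges at $z$ if and only if $\frak{a}^{(*)}(z,H^{\le n})$ does. Here $H^{\le n}=H^{\langle n\rangle}$ because $\supp H\subset\N$. The divergence of $\frak{a}(z,H^{\langle n\rangle})$ at every $z\in\T$ was shown in the proof of Theorem \ref{t1} via Lemma \ref{gr9} and Lemma \ref{argu}. Therefore $\{\phi_n^*(z,\sigma)\}$ diverges everywhere. This is exactly the $\mu$-trick used there, now with $\mu=0$ and the factor $1+\frak{r}$ in place of $\mu(1-\mu^2)^{-1/2}+\frak{r}$.

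\textbf{Step 3 (the orthogonal series).} I will express partial sums of an $\ell^2$ combination of $\phi_n$ through $\phi_n^*$ using the Christoffel--Darboux-type structure of the recursion. Iterating $\phi_n^*=\rho_{n-1}^{-1}(\phi_{n-1}^*-\gamma_{n-1}z\phi_{n-1})$, where $\rho_k=(1-|\gamma_k|^2)^{1/2}$, gives
\[
\phi_n^*=\Bigl(\prod_{k<n}\rho_k^{-1}\Bigr)\Bigl(1-\sum_{k<n}\gamma_k\Bigl(\prod_{j\le k}\rho_j\Bigr)z\phi_k\Bigr).
\]
Take $\alpha_{k+1}$-type coefficients: put
\[
\alpha_k'=\gamma_k\prod_{j\le k}\rho_j,
\]
which lies in $\ell^2$. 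Then the partial sums of $\sum_k\alpha_k' z\phi_k$ equal $1-\bigl(\prod_{k<n}\rho_k\bigr)\phi_n^*$. The product $\prod_{k<n}\rho_k$ converges to a positive limit, so these partial sums diverge everywhere by Step 2.

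The main obstacle is removing the factor $z$, so that the series is genuinely in $\{\phi_n\}$. For this I plan to use $z\phi_k=\rho_k\phi_{k+1}+\overline{\gamma_k}\phi_k^*$ and absorb the $\phi_k^*$-terms. Alternatively, I will simply note that multiplying by $\bar z$ on $\T$ does not affect pointwise divergence, and expand $z\phi_k$ in the orthonormal basis: $z\phi_k=\rho_k\phi_{k+1}+(\text{component along }\phi_0,\ldots,\phi_k)$. Verifying that this rewriting gives an honest $\ell^2$ coefficient sequence with the same partial sums (up to uniformly convergent errors) is the delicate point. The simplest route is to use $z\phi_k=\rho_k\phi_{k+1}+\overline{\gamma_k}\phi_k^*$ together with $|\gamma_k\,\overline{\gamma_k}\phi_k^*|\lesssim|\gamma_k|^2$. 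Since $\phi_k^*$ is uniformly bounded by Step 1, $\sum_k|\gamma_k|^2\,|\phi_k^*|$ converges absolutely and uniformly. What remains is then $\sum\alpha_k'\rho_k\phi_{k+1}$ with $\ell^2$ coefficients, and its partial sums diverge everywhere.
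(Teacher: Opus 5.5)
Your argument is correct, and Step 2 is exactly the paper's reasoning. The differences are in how you identify $\sigma$ and, to a lesser extent, in Step 3.

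\textbf{Identifying $\sigma$.} The paper goes through the Schur function. It uses the Wall-polynomial identity $\frak{r}(z,F^{\le n+1})=-zA_n(z,\sigma)/B_n(z,\sigma)$ and pushes the uniform convergence of $\frak{r}$ from $\T$ into $\D$ by the maximum principle. From this it concludes $\frak{r}(z,H)=-zf(z,\sigma)$ with $f\in A(\D)$ small, and reads off $w=(1-|f|^2)/|1-zf|^2$.

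You instead use the weak convergence of the Bernstein--Szeg\H{o} measures $|\phi_n^*|^{-2}dm\to\sigma$, which holds for every nontrivial $\sigma$, together with $|\frak{a}^{(*)}|^{-2}=1-|\frak{r}|^2$ on $\T$. Your route is more elementary: it needs nothing inside $\D$, and it gives absolute continuity of $\sigma$ for free. The paper's route yields extra structure, namely that the Schur function itself lies in $A(\D)$ with norm $\lesssim e^{-n^*}$. Both produce the same $w$, since $|f|=|\frak{r}|$ and $|1-zf|=|1+\frak{r}|$ on $\T$.

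\textbf{Step 3.} The factor $z$ is not an obstacle. For fixed $z\in\T$ the partial sums $\sum_{k<n}\gamma_k\nu_k\phi_k(z)$ equal $\bar z\,(1-\nu_n\phi_n^*(z))$, and multiplying by the constant $\bar z$ does not affect divergence. This is how the paper concludes, with $\alpha_k=\gamma_k\nu_k$. Your rewriting through $z\phi_k=\rho_k\phi_{k+1}+\overline{\gamma_k}\phi_k^*$ is valid but unnecessary.

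\textbf{Index slip.} Telescoping gives the coefficient $\gamma_k\prod_{j<k}\rho_j$, not $\gamma_k\prod_{j\le k}\rho_j$. Already for $n=1$ one has $\phi_1^*=\rho_0^{-1}(1-\gamma_0 z)$. So your claimed exact identity for the partial sums is false as written. The discrepancy is bounded termwise by $|\gamma_k|^3\sup_{\T}|\phi_k|$, and $\sup_{\T}|\phi_k|$ is bounded uniformly in $k$ by your Step 1. Hence the discrepancy converges uniformly and the conclusion survives.
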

\begin{proof}Take $F=H$ from \eqref{sc1}-\eqref{sc3}. For such $F$, define $\{\gamma_n\}$ and then $\sigma$ as above in \eqref{sirg}. From \eqref{conn}, one has
\[
\phi_n^*(z,\sigma)=\frak{a}^{(*)}(z,F^{\le n})(1+\frak{r}(z,F^{\le n}))\,.
\]
By Theorem \ref{t1} and Lemma \ref{sss}, the sequence $\{\phi_n^*(z,\sigma)\}$ diverges at every $z\in \T$. Now, we only need to show that $\sigma$ satisfies the required properties when $n^*$ is chosen large enough. Define the Wall polynomials, related to $\sigma$, by $A_n(z,\sigma)$ and $B_n(z,\sigma)$ (see \cite{khr}). Denote by $f(z,\sigma)$ the Schur function for $\sigma$. Then, we have (see \eqref{conn} and the formula (5.5) in \cite{khr}).
\begin{equation}\label{ljg}
\frak{r}(z,F^{\le n+1})=\frac{\frak{b}(z,F^{\le n+1})}{\frak{a}^{(*)}(z,F^{\le n+1})}=-z\frac{A_n(z,\sigma)}{B_n(z,\sigma)}\,.
\end{equation}
By $\eqref{sc3}$, we have $\frak{r}(z,F^{\le n+1})\to \frak{r}(z,F)$ uniformly over $\T$. The right hand side in \eqref{ljg} is analytic in $\D$ so we can extend this uniform convergence to $\D$. Since ${A_n(z,\sigma)}/{B_n(z,\sigma)}\to f(z,\sigma)$ locally uniformly in $\D$ (check Corollary 4.7 in \cite{khr}), we get $ \frak{r}(z,F)=-zf(z,\sigma)$, $f(z,\sigma)\in A(\D)$ and $\|f(z,\sigma)\|_{A(\D)}\lesssim e^{-n^*}$ by \eqref{stek}. By the formula (2.2) in \cite{khr}, we get
\[
w:=\sigma'(z)=\frac{1-|f(z,\sigma)|^2}{|1-zf(z,\sigma)|^2},\quad z\in \T
\]
and $|w-1|\lesssim e^{-n^*}$. Making $n^*$ large enough, we ensure the required bound $\|w-1\|_{C(\T)}\le \epsilon$. We now focus on the second claim of the theorem.
From the OPUC recurrence, we get
\[
\nu_n\phi^*_n(z,\sigma)=1-z\sum_{j=0}^{n-1}\gamma_j\nu_j\phi_j(z,\sigma), \quad \nu_n=\prod_{j\le n-1}(1-|\gamma_j|^2)^{\frac 12}\,,
\]
which is a partial sum of the orthogonal series $\{\phi_j\}|_{j=0}^\infty$ in $L^2_\sigma(\T)$. For Szeg\H{o} measures, $\lim_{n\to\infty}\nu_n$ converges to a positive number. We take $\alpha_j=\gamma_j\nu_j$. Since $\{\alpha_j\}\in \ell^2(\Z^+)$ and $\{\phi_n^*(z,\sigma)\}$ diverges, the claim follows.
\end{proof}\smallskip

\noindent {\bf Other results on pointwise convergence/divergence.}\medskip

Below, we list some applications  of  general results by Menshov and Olicz on orthogonal series  (see  \cite{suetin}).  In what follows, $\{\gamma_n\}$ are the recursion parameters of OPUC with measure $\sigma$. \medskip

$\bullet$ (see \cite{kacz}, p.190) {\it If $\sum_{n\ge 1}|\gamma_n|^2\log^2n<\infty$, then $\lim_{n\to\infty} \phi_n^*(z,\sigma)$ exists a.e. on $\T$.}\medskip

$\bullet$ (see \cite{kacz}, p.201) {\it 
Suppose $\omega(t):\R^+\mapsto \R^+$, $\omega(t) \uparrow +\infty$ as $t\to\infty$, and $\sum_{n\ge 0}\omega^{-1}(n)<\infty$. If $\gamma_n\to 0$ and 
\[
\sum_{n\ge 0}  |\gamma_n|^2\cdot (\log^2|\gamma_n|)\cdot  \omega(\log |\log |\gamma_n||)<\infty,
\]
then $\lim_{n\to\infty} \phi_n^*(z,\sigma)$ exists a.e. on $\T$}. \smallskip

In particular, (see \cite{kacz}, p.200)  $\{\gamma_n\}\in \ell^p(\Z^+), p\in [1,2)$ implies a.e. convergence of $\{\phi_n^*(z,\sigma)\}$. \bigskip

The following result shows the a.e. convergence of $\{\phi_n^*\}$  for lacunary $\ell^2$ recursion parameters. \medskip

$\bullet$ (see \cite{Jelena} and \cite{Den26sparse}) {\it 
Suppose $\{n_j\}|_{j=0}^\infty$ is a subsequence in $\N$ that satisfies 
\begin{equation}
\frac{n_{j+1}}{n_j}\ge q, \quad \forall j\in \Z^+, \quad n_0=1\,,
\end{equation}
where $q\ge 2$. Take $\{\delta_j\}\in \ell^2(\Z^+)$ such that $\|\left\{\delta_j\right\}\|_{\ell^\infty(\Z^+)}<1$. Let $\gamma_{n_j}=\delta_j, \, \forall j\in \Z^+$ and set $\gamma_n=0$ for all other $n\in \Z^+$. Then, $\lim_{n\to\infty} \phi_n^*(z,\sigma)$ exists for a.e. on $\T$}. \smallskip

All these results can be directly applied to show pointwise convergence of $SU(1,1)$ NLFT on $\Z$ after we split the problem to $\{n\ge 0\}$ and $\{n<0\}$. \smallskip

For measures $\sigma$ that are more regular, we list two results (one can find more in \cite{freud,golp}):\smallskip


$\bullet$ (see \cite{seg}, Theorem 12.1.3) {\it If $d\sigma=wdm$ and the modulus of continuity of $w$ satisfies $\omega(h,w)\lesssim (|\log h|)^{-1-\epsilon}$ for some $\epsilon>0$, then $\{\phi_n^*(z,\sigma)\}$ converges uniformly in $z\in \T$.} \medskip

$\bullet$ (see \cite{murman}, p.32) {\it There is a positive continuous $w$ such that $\limsup_{n\to\infty}|\phi_n(z^*,\sigma)|=+\infty$ at some $z^*\in \T$ where $d\sigma=wdm$.}\smallskip

The mentioned result of M. Ambroladze is one of many that deal with the Steklov problem in the theory of orthogonal polynomials. That problem, generally speaking, asks to estimate the size of the orthogonal polynomial given certain assumptions on the orthogonality measure $\sigma$ (see \cite{suetin} and \cite{Rakh1,Den1,Den2,Den3,Den4,Den5,Den6} for more recent results in this direction).  For example, for $\sigma\in \szc$, the question whether $\limsup_{n\to\infty}|\phi_n(z,\sigma)|<\infty$ for a.e. $z\in \T$ is still open (see \cite{avila} where this problem is viewed as a version of Schr\"odinger conjecture in the context of Jacobi matrices). However, we have the following result:

$\bullet$  (see \cite{suetin}, p.36) {\it  If $\sigma\in \szc$, then
\[
|\phi_n(z,\sigma)|=o(1)\log n
\]
for a.e. $z\in \T$.}

\medskip

\section{Appendix.} 
\begin{lemma}\label{lemap1}
Suppose $\{x_n\}, \{y_n\}$ and $\{z_n\}$ are given sequences of nonnegative numbers and
\[
x_{n+1}\le x_n(1+y_n)+z_n,\,\forall n\ge 0\,,
\]
then
\begin{equation}\label{ap1}
x_n\le M_n\left(x_0+\sum_{j=0}^{n-1}z_jM_{j+1}^{-1}\right)\,,
\end{equation}
where $M_n:=\prod_{j=0}^{n-1} (1+y_j), M_{0}:=1$. Moreover, if $x_0\le 1$, $
\|\{y_n\}\|_{\ell^1(\Z^+)}\le \tfrac 12$ and $
\|\{z_n\}\|_{\ell^1(\Z^+)}\le \tfrac 12$, we get
\begin{equation}\label{ap2}
x_n\le x_0+C\|\{y_n\}\|_{\ell^1(\Z^+)}+C\|\{z_n\}\|_{\ell^1(\Z^+)}\,.
\end{equation}
\end{lemma}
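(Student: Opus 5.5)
The plan is to prove \eqref{ap1} by a discrete variation-of-constants argument, and then obtain \eqref{ap2} from elementary bounds on the products $M_n$.

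\emph{Proof of \eqref{ap1}.} Divide the recursion by $M_{n+1}=M_n(1+y_n)>0$ and set $u_n:=x_n/M_n$. Since $1+y_n\ge 1$, we get
\[
u_{n+1}\le \frac{x_n(1+y_n)}{M_{n+1}}+\frac{z_n}{M_{n+1}}=u_n+z_nM_{n+1}^{-1}.
\]
Telescoping from $0$ to $n-1$ and using $u_0=x_0$ (because $M_0=1$) gives $u_n\le x_0+\sum_{j=0}^{n-1}z_jM_{j+1}^{-1}$. Multiplying by $M_n$ yields \eqref{ap1}; the base case $n=0$ is trivial.

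\emph{Proof of \eqref{ap2}.} Let $Y:=\|\{y_n\}\|_{\ell^1(\Z^+)}$ and $Z:=\|\{z_n\}\|_{\ell^1(\Z^+)}$.
\begin{itemize}
\item Since $y_j\ge 0$, we have $M_{j+1}\ge 1$, so $M_{j+1}^{-1}\le 1$. Hence the sum in \eqref{ap1} is at most $Z$.
\item From $\log(1+y)\le y$ we get $M_n\le e^{Y}$. Because $Y\le \tfrac12$, convexity of $e^t$ on $[0,\tfrac12]$ gives $e^{Y}\le 1+2(e^{1/2}-1)Y\le 1+CY$.
\item Therefore $x_n\le (1+CY)(x_0+Z)=x_0+Z+CYx_0+CYZ$.
\item Finally, $x_0\le 1$ gives $CYx_0\le CY$, and $Y\le\tfrac12$ gives $CYZ\le CZ$. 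This yields $x_n\le x_0+CY+CZ$ with an absolute constant $C$, which is \eqref{ap2}.
\end{itemize}

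No step is a real obstacle. The only point that needs care is that \eqref{ap2} must be linear in $Y$ and $Z$, with no $x_0$-dependent constant in front of them. This is exactly where the normalizations $x_0\le1$ and $Y,Z\le\tfrac12$ are used: they control the cross terms $Yx_0$ and $YZ$.
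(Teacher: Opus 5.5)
Your proof is correct and follows the paper's argument: the same normalization $x_n=M_n\widetilde x_n$ with telescoping gives \eqref{ap1}. For \eqref{ap2} the paper simply says it follows, and your bounds $M_{j+1}^{-1}\le 1$, $M_n\le e^{Y}\le 1+CY$, together with the use of $x_0\le 1$ and $Y\le\tfrac12$ to absorb the cross terms, correctly fill in that step.
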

\begin{proof}
Let $x_n=M_n\widetilde x_n, n\in \Z^+$. Then, we have
$
\widetilde x_{n+1}\le \widetilde x_n+M_{n+1}^{-1}z_n, \, \widetilde x_0=x_0\,.
$
After adding those bounds, we get
\[
\widetilde x_n\le x_0+\sum_{j=0}^{n-1} M_{j+1}^{-1}z_j\,.
\]
Multiplying  both sides with $M_n$, we get \eqref{ap1}. The estimate  \eqref{ap2} follows.
\end{proof}

\begin{lemma}\label{argu} Suppose the sequence $F$ satisfies two properties: $\supp F_n\subset [n_0,\infty)$ and $F_n\to 0$ as $n\to +\infty$. Then, 
\[
|\arg \frak{a}^{(*)}(z,F^{\langle n\rangle})-\arg \frak{a}^{(*)}(z,F^{\langle n-1\rangle})|\lesssim |F_n|\to 0, \quad n\to\infty
\]
uniformly in $z\in \overline{\D}$.
\end{lemma}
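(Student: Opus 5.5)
The plan is to compare $\frak{a}^{(*)}(z,F^{\langle n\rangle})$ with $\frak{a}^{(*)}(z,F^{\langle n-1\rangle})$ using one step of the recursion \eqref{rec5}.

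Since $\supp F\subset[n_0,\infty)$, once $n$ is large enough that $-n<n_0$, passing from $F^{\langle n-1\rangle}$ to $F^{\langle n\rangle}$ only adds the single coefficient $F_n$ at the right end of the support. Put $G=F^{\langle n-1\rangle}$. The recursion \eqref{rec5}, or equivalently \eqref{mul13} with the one-point sequence $F_n\chi_{\{n\}}$, gives
\[
\frak{a}^{(*)}(z,F^{\langle n\rangle})=(1-|F_n|^2)^{-\frac12}\bigl(\frak{a}^{(*)}(z,G)+F_nz^{n}\frak{b}^{(*)}(z,G)\bigr).
\]
It is convenient to factor this as
\[
\frak{a}^{(*)}(z,F^{\langle n\rangle})=(1-|F_n|^2)^{-\frac12}\,\frak{a}^{(*)}(z,G)\bigl(1+F_nz^n\,\frak{b}^{(*)}(z,G)/\frak{a}^{(*)}(z,G)\bigr).
\]

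The next step is to bound the quotient $g:=z^n\frak{b}^{(*)}(z,G)/\frak{a}^{(*)}(z,G)$ on $\overline{\D}$. Because $G$ has compact support, $\frak{a}^{(*)}(\cdot,G)$ and $z^n\frak{b}^{(*)}(\cdot,G)$ are polynomials in $z$, since $\frak{b}_n$ contains only powers $z^k$ with $k\le n$. Moreover $\frak{a}^{(*)}(\cdot,G)$ is outer, hence zero-free in $\D$, so $g$ is analytic in $\D$. On $\T$ we have $|g|=|\frak{r}(z,G)|<1$ by \eqref{sul}. The maximum principle then gives $|g|\le1$ on $\overline{\D}$. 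I expect this to be the only delicate point: one must check that $g$ is genuinely analytic, meaning that the powers of $z$ in $\frak{b}^{(*)}$ are nonpositive up to $z^{-n}$ (which follows from the triangular structure of \eqref{rec5}), and that the denominator has no zeros in $\D$, which is guaranteed by the outer-function property.

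Now take $n$ large enough that $|F_n|\le 1/2$. The factor $1+F_ng$ then lies in the disc of radius $|F_n|$ around $1$, which is contained in the right half-plane. Its principal argument is therefore well defined and satisfies $|\arg(1+F_ng)|\le C|F_n|$. The positive scalar factor $(1-|F_n|^2)^{-1/2}$ contributes nothing to the argument.

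It remains to check that the arguments add with the correct branch. All three functions $\frak{a}^{(*)}(z,F^{\langle n\rangle})$, $\frak{a}^{(*)}(z,G)$ and $1+F_ng$ are zero-free in $\D$ and equal positive reals at $z=0$; at the origin $g(0)=0$, because $z^n\frak{b}^{(*)}$ vanishes there when $n\ge n_0$. The argument of each function is defined by continuous continuation from $0$, as in \eqref{hilb}, and extends continuously to $\overline{\D}$. Hence
\[
\arg\frak{a}^{(*)}(z,F^{\langle n\rangle})=\arg\frak{a}^{(*)}(z,G)+\arg(1+F_ng)
\]
holds exactly, and the difference is $O(|F_n|)$ uniformly in $z\in\overline{\D}$. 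Finally, $F_n\to0$ as $n\to+\infty$ by hypothesis, which gives the convergence to $0$ claimed in the statement.
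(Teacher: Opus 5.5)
Your proposal is correct and follows essentially the same route as the paper: one step of \eqref{rec5} writes the ratio $\frak{a}^{(*)}(z,F^{\langle n\rangle})/\frak{a}^{(*)}(z,F^{\langle n-1\rangle})$ as $(1-|F_n|^2)^{-1/2}\bigl(1+F_nz^n\frak{b}^{(*)}/\frak{a}^{(*)}\bigr)$, with the quotient bounded by $1$ on $\T$ and on $\overline{\D}$ by the maximum principle, so the argument bound follows. Your explicit check that the arguments add with consistent branches (every factor is zero-free and positive at $z=0$) is a step the paper leaves implicit, and you handle it correctly.
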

\begin{proof}For $n\ge |n_0|$, we have $F^{\le n}=F^{\langle n\rangle}$. Notice that \eqref{rec5} gives
\begin{eqnarray}
\frak{a}^{(*)}(z,F^{\langle n\rangle})=\frac{1}{(1-|F_n|^2)^{\tfrac 12}}(F_nz^n \frak{b}^{(*)}(z,F^{\langle n-1\rangle})+\frak a^{(*)}(z,F^{\langle n-1\rangle}))
\end{eqnarray}
for $n\ge |n_0|+1$. Then, 
\begin{eqnarray}
\frac{\frak{a}^{(*)}(z,F^{\langle n\rangle})}{\frak a^{(*)}(z,F^{\langle n-1\rangle})}
=\frac{1}{(1-|F_n|^2)^{\tfrac 12}}
\left(\frac{F_nz^n \frak{b}^{(*)}(z,F^{\langle n-1\rangle})}{\frak a^{(*)}(z,F^{\langle n-1\rangle})}+1\right)\,.
\end{eqnarray}
Since
$
\left|{\frak{b}^{(*)}(z,F^{\langle n-1\rangle})}/{\frak a^{(*)}(z,F^{\langle n-1\rangle})}\right|\le 1, \, z\in \T
$
and $F_n\to 0$, we get
\[
\left|\frac{\frak{a}^{(*)}(z,F^{\langle n\rangle})}{\frak a^{(*)}(z,F^{\langle n-1\rangle})}-1\right|
\lesssim |F_n|
\]
uniformly in $z\in \T$ if $n$ is large enough. The maximum principle extends this bound to $z\in \D$. The estimate on the difference of the arguments follows.
\end{proof}
\begin{lemma}\label{var} Suppose the sequence $G$ is such that $\supp G\subset [n_0,\infty)$  and  $\sum_{m+1\le s\le m+p}|G_s|\le \tfrac 12$ for a fixed $m$ and $p\in \N$. Then,
\begin{equation}\label{efim}
|\frak{r}(z,G^{\le m+p})-\frak{r}(z,G^{\le m})|\lesssim \sum_{m+1\le s\le m+p}|G_s|
\end{equation}
for $z\in \T$.
\begin{proof}We take $z\in \T$. For shorthand, we let $\alpha_j(z):=\frak{a}^{(*)}(z,G^{\le j})$, $\beta_j(z):=\frak{b}(z,G^{\le j})$, and $r_j:=\beta_j/\alpha_j$. Then, from \eqref{rec5}, one has
\[
\alpha_{n+1}=\frac{1}{(1-|G_{n+1}|^2)^{\tfrac 12}}(\alpha_n+G_{n+1}z^{n+1}\beta_{n}^*), \, \beta_{n+1}=\frac{1}{(1-|G_{n+1}|^2)^{\tfrac 12}}(\beta_n+G_{n+1}z^{n+1}\alpha_n^*)\,.
\]
That gives
\begin{eqnarray*}
r_{n+1}-r_n=\frac{r_n+G_{n+1}z^{n+1}\alpha_n^*/\alpha_n}{1+G_{n+1}z^{n+1}\beta_n^*/\alpha_n}-r_n=G_{n+1}z^{n+1}\frac{\alpha_n^*/\alpha_n-\beta_n^*r_n/\alpha_n}{1+G_{n+1}z^{n+1}\beta_n^*/\alpha_n}\stackrel{
\eqref{sul}}{=}\\\frac{G_{n+1}z^{n+1}}{\alpha_n^2(1+G_{n+1}z^{n+1}\beta_n^*/\alpha_n)}
\end{eqnarray*}
and, recalling that $|\beta_n^*/\alpha_n|\le 1, |r_n|< 1, |\alpha_n|\ge 1$, we get
\[
=|r_{n+1}-r_n|\le {|G_{n+1}|}/({1-|G_{n+1}|})\,,
\]
so \eqref{efim} follows by summation and triangle inequality.
\end{proof}
\end{lemma}
\bigskip

\bibliographystyle{plain} 
\bibliography{bibfile}
\end{document}